\newtheorem{thm}{Theorem}                                          
\newtheorem{prop}[thm]{Proposition}                          
\newtheorem{lem}[thm]{Lemma}
\newtheorem{cor}[thm]{Corollary}
\newtheorem{definition}[thm]{Definition}
\newtheorem{remark}{Remark}[section]
\def\FF{{\mathcal F}}
\def\GG{{\mathcal G}}
\def\QQ{{\mathcal Q}}
\def\XX{{\mathcal X}}
\def\RR{{\mathcal R}}
\def\PP{{\mathcal P}}
\def\fd{{\, \rightarrow\, }}
\def\lfd{{\, \longrightarrow\, }}
\def\nada {{\, \rule{0.4cm}{0.5pt}\,}}
\def\Hom {\mathop {\rm Hom} \nolimits}
\def\Ext {\mathop {\rm Ext} \nolimits}
\def\End {\mathop {\rm End}\nolimits}
\def\supp {\mathop {\rm supp}\nolimits}
\def\Supp {\mathop {\rm Supp}\nolimits}
\def\mod {\mathop {\rm mod}\nolimits}
\renewcommand\thefigure{\thesection.\@arabic\c@figure}
\renewcommand\thetable{\thesection.\@arabic\c@table}
\subjclass[2010]{16G10,16G70}
\keywords{Stratifying System, Hereditary Algebra, Euclidean Algebra, Exceptional Sequence} 
\begin{document}
\setcounter{page}{1} 
\label{firstpage}
\title{Stratifying systems over the hereditary path algebra with quiver $\mathbb{A}_{p,q}$}
\author{Paula  Cadavid}
\address[Paula A. Cadavid]{USP, Instituto de Matemática e Estatística,
Rua do Matão 1010, Cidade Universitaria, São Paulo-SP}
\email{paca@ime.usp.br}

\author{Eduardo do N. Marcos}
\address[Eduardo do N. Marcos]{USP, Instituto de Matemática e Estatística,
Rua do Matão 1010, Cidade Universitaria, São Paulo-SP}
\email{enmarcos@ime.usp.br}

\begin{abstract}
The authors have proved in [J. Algebra Appl. 14 (2015), no. 6]  that the size 
of a stratifying system  over a finite-dimensional hereditary path algebra $A$ is at most $n$, where $n$ is the number of isomorphism classes of simple 
$A$-modules. Moreover, if $A$ is of Euclidean type a stra\-ti\-fying  system over $A$ has at most $n-2$ regular modules.  In this work, we construct a family of  stra\-ti\-fying systems  of size $n$ with a maximal number of regular elements,  over the hereditary  path algebra with quiver $\widetilde{\mathbb {A}}_{p,q} $, canonically oriented.
\end{abstract}

\maketitle


\section{Preliminaries}

Throughout this article the term algebra  means associative and  finite dimensional basic algebra over an algebraically closed field $K$. Therefore any algebra can be viewed as a quotient $A=KQ/I$ of a path algebra $KQ$, where $Q$ is a quiver and $I$ is an admissible ideal of $KQ$. For a vertex $v$ of $Q$, $S_v$ will denote the simple module associated with $v$. The projective cover and the injective envelope of $S_v$  will be denoted by $P_v$ and $I_v$, respectively.

 If $A$ is a $K$-algebra then  $\mod A$ denotes the category whose objects are finitely generated right $A$-modules, $D: \mod A \lfd \mod A^{op}$ the usual duality $\Hom_K(\nada, K)$, $\tau$ the Auslander-Reiten translation, and  $\Gamma(\mod A)$ the Auslander-Reiten quiver of $\mod A$.

The concept of stratifying system ($s.s.$) was introduced as a  generalization of the standard 
modules by Erdmann and Sáenz in \cite{ES}. Later Marcos et al. introduced in \cite{MMS2, MMS1} 
 $s.s.$ via relative simple modules and via relative projective modules.
So there are, in the literature, various equivalent definitions of stratifying sistems.
In this work we use the following definition.

\begin{definition} \cite{MMS2}\label{ss} A {\bf stratifying system} ($s.s.$) of size $t$ consists
  of a completely ordered set  $X = \{X_i\}_{i=1}^{t}$ of indecomposable $A$-modules
  satisfying the following conditions: 
\begin{enumerate}
\item $\Hom_{A}(X_j,X_i)=0$, for $j > i$.
\item $\Ext_{A}^{1}(X_j, X_i)=0,$ for $j \geq i$ .   
\end{enumerate}
\end{definition}

Let $A$ be a hereditary $K$-algebra. An $A$-module $M$ is called {\bf exceptional}
if $\End_A(M)\cong K$ and $\Ext^{1}_{A}(M,M)=0$ and a sequence $X=(X_1, \ldots,X_t)$  is called {\bf exceptional} if  it is a sequence of exceptional modules satisfying $\Hom_A(X_i,X_j)=0$, for $i>j$ and $\Ext^{1}_A(X_i,X_j)=0$ for $i \geq j.$ A sequence $X=(X_1,\ldots,X_n)$  is said to be {\bf complete} if $n$ is the number of isomorphism classes of simple 
$A$-modules.

\begin{lem}\label{excseq} If $A$ is a hereditary $K$-algebra, then the sequence of $A$-modules $(X_1,\ldots, X_t)$ 
is an exceptional sequence if and only if, the set $X = \{X_i\}_{i=1}^{t}$ is a $s.s.$ over $A$. 
\end{lem}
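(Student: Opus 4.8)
The plan is to observe that, after renaming the dummy indices, the Hom- and Ext-vanishing conditions in Definition~\ref{ss} and those in the definition of an exceptional sequence are literally the same. Indeed, condition (1) of Definition~\ref{ss}, namely $\Hom_A(X_j,X_i)=0$ for $j>i$, is exactly the requirement $\Hom_A(X_i,X_j)=0$ for $i>j$; and condition (2), $\Ext^1_A(X_j,X_i)=0$ for $j\ge i$, is exactly $\Ext^1_A(X_i,X_j)=0$ for $i\ge j$. Moreover, taking $j=i$ in condition (2) already forces $\Ext^1_A(X_i,X_i)=0$ for every $i$. Hence the only real discrepancy between the two notions is that a stratifying system demands each $X_i$ to be indecomposable, whereas an exceptional sequence demands $\End_A(X_i)\cong K$ (the extra condition $\Ext^1_A(X_i,X_i)=0$ being, as just noted, automatic on the stratifying-system side). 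So the lemma reduces to the assertion that, over a hereditary algebra, a module $M$ with $\Ext^1_A(M,M)=0$ is indecomposable if and only if $\End_A(M)\cong K$ --- i.e., an indecomposable module without self-extensions over a hereditary algebra is a brick.

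One implication is immediate: if $\End_A(M)\cong K$ then $\End_A(M)$ has no nontrivial idempotents, so $M$ is indecomposable; combined with the identification of the Hom/Ext conditions above, this shows that every exceptional sequence is a $s.s.$ For the converse, let $\{X_i\}_{i=1}^t$ be a $s.s.$ Each $X_i$ is indecomposable, and condition (2) with $j=i$ gives $\Ext^1_A(X_i,X_i)=0$. Now I would invoke the Happel--Ringel lemma: over a hereditary algebra, if $Y,Z$ are indecomposable and $\Ext^1_A(Z,Y)=0$, then every nonzero morphism $Y\to Z$ is a monomorphism or an epimorphism. Applying this with $Y=Z=X_i$, every nonzero endomorphism of $X_i$ is injective or surjective, hence --- $X_i$ being finite-dimensional --- an automorphism; thus $\End_A(X_i)$ is a finite-dimensional division algebra over the algebraically closed field $K$, so $\End_A(X_i)\cong K$ and $X_i$ is exceptional. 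Together with the comparison of conditions, this shows $(X_1,\dots,X_t)$ is an exceptional sequence, completing the equivalence.

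The only non-formal ingredient --- and therefore the step I expect to be the main obstacle --- is the Happel--Ringel lemma, equivalently the fact that an indecomposable module $M$ with $\Ext^1_A(M,M)=0$ over a hereditary algebra satisfies $\End_A(M)\cong K$. It may be quoted from the literature, or proved directly as follows: if $\End_A(M)\ne K$, choose $0\ne f\in\rad\End_A(M)$ (so $f$ is nilpotent, hence neither mono nor epi), put $I=\Im f$, and consider the non-split short exact sequences $0\to\ker f\to M\to I\to 0$ and $0\to I\to M\to\conuc f\to 0$; using that $A$ is hereditary (so that $\Ext^1_A(C,-)$ is right exact for $C=\conuc f$) one lifts the second sequence through the epimorphism $M\epi I$ and, tracking the resulting maps, produces a non-split self-extension of $M$, contradicting $\Ext^1_A(M,M)=0$. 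All the remaining work is just a line-by-line matching of the two definitions.
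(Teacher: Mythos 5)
Your argument is correct and is essentially the paper's own proof: both reduce the lemma to the fact that an indecomposable module without self-extensions over a hereditary algebra is a brick, and both obtain this from the Happel--Ringel lemma (quoted in the paper as \cite[Chapter VIII, Lemma 3.3]{ASS1}) applied with $T_1=T_2$. The extra sketch you give of a direct proof of that lemma is not needed and is not in the paper, but the core reasoning coincides.
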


\begin{proof}
 The only fact that needs to be observed is that if an indecomposable module does not have self extensions then its endomorphism ring is $K$.
 This is a consequence of \cite[Chapter VIII, Lemma 3.3]{ASS1}, which states that if $A$ is a finite dimensional hereditary algebra over an algebraically closed
 field $K$ and $T_1$ and $T_2$ are two indecomposable $A$ modules with $\Ext^1_A(T_1, T_2) = 0$ then every nonzero morphism between $T_1$ and $T_2$ is a monomorphism or 
 an epimorphism. In particular if $T=T_1 =T_2$ then all nonzero morphism is an isomorphism, so $\End_A(T) \cong K$.\end{proof}
 
 Since all the algebras in this work are hereditary, the former lemma gives a justification for identifying  
a $s.s.$ $X = \{X_i\}_{i=1}^{t}$  with the exceptional sequence $(X_1,\ldots, X_t)$. 

We now stating some known statements, which we will use.

\begin{lem}\cite[Lemma 1]{CB} \label{estender}Any exceptional sequence $(X_{1},\ldots,X_{a},Z_{1},\ldots,Z_{c})$
can be enlarged to a complete 
sequence $(X_{1},\ldots, X_{a},Y_{1}, \ldots, Y_{b},Z_{1},\ldots, Z_{c}).$
\end{lem}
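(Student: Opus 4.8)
\emph{The plan.} I would produce the inserted block $Y_1,\dots,Y_b$ as a complete exceptional sequence of a suitable \emph{perpendicular category}, quoting the fact that every hereditary algebra admits such a sequence. Write $n$ for the number of simple $A$-modules and set $b=n-a-c$; if $b\le 0$ the given sequence is already complete, so assume $b\ge 1$. For an $A$-module $M$ put ${}^{\perp}M=\{N\in\mod A:\Hom_A(N,M)=0=\Ext^1_A(N,M)\}$ and $M^{\perp}=\{N\in\mod A:\Hom_A(M,N)=0=\Ext^1_A(M,N)\}$. The structural input is Schofield's perpendicular category theorem: for an exceptional module $E$ over a hereditary algebra with $N$ simple modules, both ${}^{\perp}E$ and $E^{\perp}$ are extension-closed exact abelian full subcategories of $\mod A$ — so on them $\Hom_A$ and $\Ext^1_A$ agree with the intrinsic $\Hom$ and $\Ext^1$ — and each is equivalent to $\mod B$ for a hereditary algebra $B$ with $N-1$ simple modules. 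Since $(X_1,\dots,X_a)$ is exceptional, $X_2,\dots,X_a$ lie in ${}^{\perp}X_1$, so $(X_2,\dots,X_a)$ is exceptional in ${}^{\perp}X_1$; peeling off one module at a time shows that $\CC:=\bigcap_{i=1}^{a}{}^{\perp}X_i$ is, compatibly with $\Hom_A$ and $\Ext^1_A$, equivalent to $\mod B$ for a hereditary algebra $B$ with $n-a$ simple modules.

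\emph{Locating the insertion.} Because $(X_1,\dots,X_a,Z_1,\dots,Z_c)$ is exceptional, each $Z_j$ is later than each $X_i$, so $\Hom_A(Z_j,X_i)=0=\Ext^1_A(Z_j,X_i)$; that is, every $Z_j$ lies in $\CC$, and $(Z_1,\dots,Z_c)$ is an exceptional sequence in $\CC$ (the homological functors of $\CC$ coinciding with $\Hom_A,\Ext^1_A$). Transporting along $\CC\cong\mod B$ and applying the perpendicular category theorem once more inside $\mod B$ to this exceptional sequence, the full subcategory $\CC'=\{M\in\CC:\Hom_A(Z_j,M)=0=\Ext^1_A(Z_j,M)\text{ for all }j\}$ is again extension-closed, exact, abelian, with matching $\Hom/\Ext^1$, and is equivalent to $\mod C$ for a hereditary algebra $C$ with $(n-a)-c=b\ge 1$ simple modules. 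Now $\mod C$ admits a complete exceptional sequence — e.g.\ the indecomposable projective $C$-modules, suitably ordered (possible since the quiver of $C$ is acyclic): these have endomorphism ring $K$ and, being projective, no $\Ext^1$ among them at all. Pulling such a sequence back through the equivalences yields modules $Y_1,\dots,Y_b$ forming an exceptional sequence in $\CC'$, hence, by the compatibility, in $\mod A$.

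\emph{Assembling.} It remains to check that $(X_1,\dots,X_a,Y_1,\dots,Y_b,Z_1,\dots,Z_c)$ is an exceptional sequence: the $X$-block and the $Z$-block are exceptional by hypothesis; the $Y$-block was just shown to be exceptional in $\mod A$; for all $k,i$ one has $\Hom_A(Y_k,X_i)=0=\Ext^1_A(Y_k,X_i)$ since $Y_k\in\CC$; and for all $j,k$ one has $\Hom_A(Z_j,Y_k)=0=\Ext^1_A(Z_j,Y_k)$ since $Y_k\in\CC'$. Its length is $a+b+c=n$, so it is a complete exceptional sequence of the required shape. The hard part is concentrated in the first step: one needs the nontrivial fact that the perpendicular category of an exceptional sequence over a hereditary algebra of rank $n$ is again equivalent to the module category of a hereditary algebra, with rank dropped by the length of the sequence and with $\Hom$ and $\Ext^1$ unchanged. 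Granting that, the rest is bookkeeping with the exceptional-sequence vanishing conditions together with the elementary fact that every hereditary algebra has a complete exceptional sequence of projectives.
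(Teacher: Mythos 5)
The paper offers no proof of this lemma; it is quoted directly from Crawley-Boevey \cite{CB}, so there is no internal argument to compare against. Your proposal is correct and is, in substance, the standard proof underlying that reference: Schofield's perpendicular-category theorem reduces the problem to a hereditary algebra with $b$ simple modules, whose suitably ordered indecomposable projectives supply the block $Y_1,\dots,Y_b$, and the remaining orthogonality conditions hold by construction since the $Y_k$ lie in the two perpendicular categories.
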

\begin{lem}\cite[Lemma 2]{CB} \label{unico} If $(X_{1},\dots,X_{n})$ and $(Y_{1},\dots,Y_n)$ are complete 
exceptional sequences such that  $X_j\cong Y_j$ for $j \not= i$, then $X_i \cong Y_i$.
\end{lem}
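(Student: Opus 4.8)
The plan is to pass to the Grothendieck group and exploit the Euler form. Since $A$ is hereditary, $\langle [M],[N]\rangle := \dim_K\Hom_A(M,N)-\dim_K\Ext^1_A(M,N)$ is a well-defined bilinear form on $K_0(A)$, which is free abelian of rank $n$ with the classes of the simple modules as a basis. The first step is to record that for \emph{any} complete exceptional sequence $(Z_1,\dots,Z_n)$ the Gram matrix $\bigl(\langle [Z_k],[Z_l]\rangle\bigr)_{k,l}$ is upper unitriangular: the defining vanishing of the sequence gives $\langle[Z_k],[Z_l]\rangle=0$ whenever $k>l$, and $\langle[Z_k],[Z_k]\rangle=1$ since $\End_A(Z_k)\cong K$ and $\Ext^1_A(Z_k,Z_k)=0$. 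In particular this matrix has determinant $1$, so $([Z_1],\dots,[Z_n])$ is a $\mathbb{Q}$-basis of $K_0(A)\otimes_{\mathbb{Z}}\mathbb{Q}$.

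Applying this to $(X_1,\dots,X_n)$, I would write $[Y_i]=\sum_{k=1}^{n}a_k[X_k]$ with $a_k\in\mathbb{Q}$ and pin down the coefficients using the hypothesis $X_j\cong Y_j$ for $j\neq i$. For each $j<i$ one has $\langle[Y_i],[X_j]\rangle=\langle[Y_i],[Y_j]\rangle=0$, because $(Y_1,\dots,Y_n)$ is exceptional and $i>j$; expanding $\langle[Y_i],[X_j]\rangle=\sum_k a_k\langle[X_k],[X_j]\rangle$ and using $\langle[X_k],[X_j]\rangle=0$ for $k>j$, one solves successively for $j=1,2,\dots,i-1$ to get $a_1=\dots=a_{i-1}=0$. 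Symmetrically, for each $j>i$ one has $\langle[X_j],[Y_i]\rangle=\langle[Y_j],[Y_i]\rangle=0$, and solving successively for $j=n,n-1,\dots,i+1$ gives $a_n=\dots=a_{i+1}=0$. Hence $[Y_i]=a_i[X_i]$, whence $1=\langle[Y_i],[Y_i]\rangle=a_i^{2}\langle[X_i],[X_i]\rangle=a_i^{2}$, so $a_i=\pm1$; since $[Y_i]$ is the class of an actual module, its coordinates (the dimension vector of $Y_i$) are nonnegative and not all zero, which rules out $a_i=-1$. Therefore $[Y_i]=[X_i]$, i.e.\ $\ldim Y_i=\ldim X_i$.

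The last step upgrades the equality of dimension vectors to an isomorphism, and this is the one genuinely non-formal ingredient: over a hereditary algebra an exceptional module is determined up to isomorphism by its dimension vector (its class is a real Schur root, hence the dimension vector of a unique indecomposable representation), so $X_i\cong Y_i$. I expect everything before this to be routine linear algebra with the Euler form, so the effort goes into citing or justifying this fact (for the algebras $\widetilde{\mathbb{A}}_{p,q}$ it also follows from the explicit description of the exceptional modules). An alternative route that avoids it is an induction on $n$ via perpendicular categories: for $i>1$ the sequences $(X_2,\dots,X_n)$ and $(Y_2,\dots,Y_n)$ are complete exceptional sequences in ${}^{\perp}X_1={}^{\perp}Y_1$, which is the module category of a hereditary algebra with $n-1$ simple modules, and they agree off position $i-1$; the case $i=1$ is handled symmetrically inside $X_n^{\perp}$, and $n=1$ is trivial.
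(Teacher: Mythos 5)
The paper offers no proof of this lemma; it is quoted verbatim from Crawley-Boevey \cite[Lemma 2]{CB}, so the only meaningful comparison is with his original argument --- and yours is essentially that argument. The Euler-form computation (unitriangular Gram matrix, hence a basis of $K_0(A)\otimes\mathbb{Q}$; orthogonality relations forcing $a_k=0$ for $k\neq i$; $a_i^2=1$ plus positivity of dimension vectors) is correct as written, and the concluding step you rightly single out as the one non-formal ingredient --- an exceptional module over a hereditary algebra is determined up to isomorphism by its dimension vector --- is exactly the input Crawley-Boevey uses, via Kac's theorem that a positive real root admits a unique indecomposable representation (or, more elementarily for hereditary algebras, the fact that the orbit of a rigid module is open and dense in the affine, hence irreducible, representation variety of its dimension vector, so two such orbits must coincide). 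Your alternative sketch via perpendicular categories would also work but needs more machinery than the main line of argument.
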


The authors of this work, proved in \cite{CMa}, Lemma 3.1,  that if $A \cong KQ$ is a hereditary algebra then the size 
of a $s.s.$ over $A$ is at most $n=|Q_0|$. Moreover, if $A$ is a hereditary algebra of Euclidean type  
and $X$ is a $s.s.$ of regular modules then the  size  of  $X$ is at most $n-2$. 

As in the case of  exceptional sequences, we say that a  $s.s.$  $X$ is complete ($c.s.s.$) if the size of $X$ is $n$.


The algebra $K(\widetilde{\mathbb {A}}_{p,q}) $ will denote the hereditary algebra whose quiver is of type 
$\widetilde{\mathbb {A}}_{p,q}$, canonically 
oriented, that is it has $p$ consecutive arrows in one direction and $q$ consecutive arrows in the other direction, we will always assume, without loss of generality, that it has $p$ consecutive arrows counterclockwise and $q$ consecutive arrows clockwise. 


The main objective of this article is to construct $c.s.s.$ with maximal number of regular elements 
over $K(\widetilde{\mathbb {A}}_{p,q}) $.

\section{The algebra $K(\widetilde{\mathbb{A}}_{p,q})$}

In this section  $p$ and $q$ are integers such that $1\leq p\leq q$. Let $\widetilde{\mathbb{A}}_{p,q}$
be the canonically  oriented  Euclidean quiver, whose picture is the following:

\begin{displaymath}
\xymatrix @!0 @R=3em @C=4pc{
& &  \ar[dl]  \stackrel{1}{\circ}& \stackrel{2}{\circ} \ar[l] & \ar[l]  \cdots & \ar[l] \stackrel{p-1}{\circ} \\
\widetilde{\mathbb{A}}_{p,q}: & _{0}\circ    &           &       &     &&  \ar[ul] \ar[dl] \,\,\,\,\circ_{p+q-1}.\\
& &  \ar[ul]  \stackrel{p}{\circ}&   \stackrel{p+1}{\circ}\ar[l] & \ar[l]  \cdots &  \stackrel{p+q-2}{\circ}\ar[l] 
  }
\end{displaymath}

For any non-negative integers $m,n$ we set by definition 
\begin{displaymath}
                   \begin {array}{rll}
                        [m,n]: = &\left \{
                         \begin{array}{ll}
                              \{m,m+1, \ldots,n\} \textrm{ if } m\leq n \\
                               \emptyset , \textrm{ if  } m>n.\\
                         \end{array} \right.\\
                  \end{array} 
           \end{displaymath}

We give a  construction of $s.s.$ over $A =K\widetilde{\mathbb {A}}_{p,q}$ with a maximal number regular mo\-dules. 
The  Auslander-Reiten quiver of such algebra, has a preprojective component, a preinjective component, an infinite family
of homogeneous tubes, and 2 connected regular components, that are tubes but not homogeneous, which are denoted by $\mathcal{T}_{\infty}$ and $\mathcal{T}_{0}$ (see Theorem \ref{desc}).

Observe that if an indecomposable module $T$ is in a homogeneous tube then $\tau(T)=T$, therefore it has self 
extension, that is $\Ext_A^1(T, T) \neq 0$.

We give the description of the non-homogeneous tubes.
 
\begin{enumerate}
\item[(a)] The simple regular representations in the tube $\mathcal{T}_{\infty}$ are the simple modules
$E_{i}^{(\infty)}=S_{i}, i \in [1,p-1],$ and 
\[\xymatrix @!0@R=2em @C=3pc{
                                                       &&  \ar[dl] 0 & 0 \ar[l] & \ar[l]  \cdots & \ar[l] 0 \\
E_{p}^{(\infty)}:& K &  & & & &  \ar[ul] \ar[dl]^{1} K\\
                                                      && \ar[ul]^{1} K & K \ar[l]^{1} & \ar[l]^{1}  \cdots & K \ar[l]^{1}
  }
	\]
\begin{equation}\label{propriedades1} \text{ satisfying }\tau E_{i+1}^{(\infty)}=E_{i}^{(\infty)},\text{ for } i \in [1, p-1], 
\text{ and  } \tau E_{1}^{(\infty)}=E_{p}^{(\infty)}.
\end{equation}
  
\item[(b)] The simple regular representations in the tube $ \mathcal{T}_{0}$ are the simple modules
$E_{j}^{(0)}=S_{p+j-1},$ $j \in [1,q-1]$ and$$\xymatrix @!0 @R=2em @C=3pc {
 &&   \ar[dl]_{1} K & K \ar[l]_{1} & \ar[l]_{1}  \cdots & \ar[l]_{1} K \\
  E_{q}^{(0)} :&K &  &  & & &  \ar[ul]_{1} \ar[dl] K\\
 & &  \ar[ul] 0 & 0 \ar[l] & \ar[l] \cdots & 0 \ar[l]\\
  }$$
 \begin{equation}\label{propriedades2}  \text{ satisfying }
\tau E_{j+1}^{0}=E_{j}^{0},\text{ for }  j \in [1,q-1],\text{ and }\tau E_{1}^{0}=E_{q}^{0}.
\end{equation}

\item[(c)] The simple regular representation in the tube  $\mathcal{T}_{\lambda}$, 
with $\lambda \in K \setminus \{0\}$ is
\begin{displaymath}
\xymatrix @!0 @R=2em @C=3pc {
  & &  \ar[dl]_{\lambda} K & K \ar[l]_{1} & \ar[l]_{1}  \cdots & \ar[l]_{1} K \\
 E^{(\lambda)} :&K &  &  & & &  \ar[ul]_{1} \ar[dl]_{1} K\\
 & &  \ar[ul]_{1} K & K \ar[l]_{1} & \ar[l]_{1} \cdots & K \ar[l]_{1}\\
  }
\end{displaymath}
\end{enumerate}

The following theorem  give us information about the tubular components of $\Gamma (\mod K\widetilde{\mathbb {A}}_{p,q})$.

\begin{thm}\label{desc}\cite[Chap. XIII, Theorem 2.5]{SS2} Let $A=K\widetilde{\mathbb {A}}_{p,q}$. Then
every component in the regular  part $\RR(A)$ of $\Gamma(\mod A)$ is one of the following stable tubes:
\begin{enumerate}
\item The tube  $\mathcal{T}_{\infty}$ of rank $p$ containing
 $E_{1}^{(\infty)},\ldots,E_{p}^{(\infty)}$,
\item The tube $\mathcal{T}_{0}$ of rank $q$
containing   $E_{1}^{(0)}, \ldots, E_{q}^{(0)},$
\item  The tube  $\mathcal{T}_{\lambda}$ of rank $1$
containing  $E^{(\lambda)}$, with $\lambda\in K\setminus \{0\}.$
\end{enumerate}
Where  $E_{j}^{(\infty)}$, $ E_{i}^{(0)}$ and  $E^{(\lambda)}$ are the simple regular  $A$-modules previously defined.
\end{thm}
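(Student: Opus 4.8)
The plan is to deduce the statement from the general structure theory of tame (Euclidean) hereditary algebras together with a handful of explicit dimension-vector computations. Write $n=p+q$ for the number of vertices. Since $A=K\widetilde{\mathbb{A}}_{p,q}$ is hereditary of Euclidean type, that theory tells us $\Gamma(\mod A)$ consists of a preprojective component, a preinjective component, and a regular part $\RR(A)$ which is a disjoint union of stable tubes whose ranks $r$ satisfy $\sum_{\text{tubes}}(r-1)=n-2$; moreover an indecomposable $M$ is preprojective, regular, or preinjective according as its defect $\partial M=\langle\delta,\ldim M\rangle$ is negative, zero, or positive, where $\delta$ is the positive generator of the radical of the Euler form $\langle-,-\rangle$ of $A$. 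Thus everything reduces to two tasks: (i) exhibit the simple regular modules together with their $\tau$-orbits, and (ii) show the tubes so obtained exhaust $\RR(A)$.

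For (i), I would first write down $\delta$ and the Euler form for the canonical orientation, and check that each of the modules $E_i^{(\infty)}$, $E_j^{(0)}$ and $E^{(\lambda)}$ displayed above is indecomposable of defect $0$, hence regular; for the simple modules this is immediate, and for the remaining three representations it is a short direct verification that the endomorphism ring is local --- in fact that the module is a brick. Being regular, these modules have no projective or injective summands involved in computing $\tau$, so $\ldim\tau M=\Phi_A(\ldim M)$ with $\Phi_A$ the Coxeter matrix; carrying out these matrix multiplications yields precisely the periodicity relations \eqref{propriedades1} and \eqref{propriedades2} as well as $\tau E^{(\lambda)}\cong E^{(\lambda)}$. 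Checking in addition that $E_p^{(\infty)}$, $E_q^{(0)}$ and $E^{(\lambda)}$ are bricks and that a self-extension first appears only after one full turn of the orbit, one concludes that $E_1^{(\infty)},\dots,E_p^{(\infty)}$ form the mouth of a stable tube of rank $p$, the modules $E_1^{(0)},\dots,E_q^{(0)}$ the mouth of one of rank $q$, and each $E^{(\lambda)}$ the mouth of a homogeneous tube.

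For (ii), which is the real content, observe that the two exceptional tubes found in (i) already contribute $(p-1)+(q-1)=n-2$ to the rank sum $\sum_{\text{tubes}}(r-1)=n-2$; hence every remaining regular component is a homogeneous tube. It then remains to match those homogeneous tubes with the family $\{E^{(\lambda)}\}_{\lambda\in K\setminus\{0\}}$: the $E^{(\lambda)}$ are pairwise non-isomorphic bricks (the scalar $\lambda$ is an isomorphism invariant of the representation), each lying on a homogeneous tube, and together with the two exceptional tubes they realise the $\mathbb{P}^1(K)$-parametrisation of $\RR(A)$, the two exceptional points corresponding to $\lambda=0$ and $\lambda=\infty$. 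The main obstacle is exactly this completeness step. One route is to quote the $\mathbb{P}^1$-indexing theorem for tame hereditary algebras directly. A more self-contained route uses that $KQ$ with $Q$ of type $\widetilde{\mathbb{A}}$ is a string algebra (indeed gentle): its band modules are precisely the modules on the $E^{(\lambda)}$-tubes, while every other indecomposable is a string module and lies in the preprojective or preinjective component or in one of the two exceptional tubes --- so there are no further regular components, which completes the proof.
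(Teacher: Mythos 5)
This statement is not proved in the paper at all: it is imported verbatim from \cite{SS2} (Chap.~XIII, Theorem~2.5), so there is no internal argument to compare your proposal against. Judged on its own, your sketch follows the standard route --- essentially the one taken in the cited source --- and is sound in outline: the defect criterion correctly isolates the regular indecomposables (for the canonical orientation $\langle\delta,e_v\rangle$ vanishes except at the sink $0$ and the source $p+q-1$), the displayed modules are bricks of defect zero, and the Coxeter computation gives the $\tau$-orbits. One small point to add in step (i): $\ldim\tau M=\Phi_A(\ldim M)$ only determines $\tau M$ up to dimension vector, so you should observe that the relevant vectors ($e_i$, and $\delta-\sum_{i=1}^{p-1}e_i$, etc.) are positive real roots and therefore admit a unique indecomposable representative, which is what turns the matrix identities into the isomorphisms \eqref{propriedades1} and \eqref{propriedades2}. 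Your diagnosis that completeness is the real content is accurate; as written, step (ii) still rests on quoting either the $\mathbb{P}^1(K)$-parametrisation of $\RR(A)$ together with the rank-sum formula $\sum_\lambda(r_\lambda-1)=n-2$, or the string/band classification for the gentle algebra $K\widetilde{\mathbb{A}}_{p,q}$. Both are legitimate here, since the statement itself is a citation; the band-module route is the more self-contained and is genuinely different in flavour from the defect-and-tubes treatment of \cite{SS2}, at the cost of having to verify separately that the band modules are exactly the modules of the homogeneous tubes and that every regular string module lies in $\mathcal{T}_{\infty}$ or $\mathcal{T}_{0}$ (which your rank-sum argument then supplies).
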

We use the simple regular modules in the tubes $\mathcal{T}_{\infty}$ and
$\mathcal{T}_{0}$ to build a $s.s.$ of  size $p+q-2$ over $K\widetilde{\mathbb {A}}_{p,q}$.
Let $F_i$ denote $E_{p-i}^{(\infty)}$, for $i \in [1,p-1]$, and let $G_i$ denote $E_{q-i}^{(0)}$, for $i \in[1,q-1]$. 
Then we have that
\begin{equation}\label{propriedades3}
\tau F_i=F_{i+1}, \text{ for }  i\in [1, p-2], \tau F_{p-1}=E_{p}^{(\infty)} \text{ and } \tau E_p= F_1 ,
\end{equation}
\begin{equation}\label{propriedades4}
\tau G_i= G_{i+1}, \text{ for  } i\in [1, q-2], \tau E_q=G_{1} \text{ and }  \tau E_q=G_{1}.
\end{equation}
We set, $\FF:= \{F_1, \ldots,F_{p-1}\}$, $F:= (F_1, \ldots,F_{p-1})$, $\GG:=\{G_1, \ldots,G_{q-1}\}$ and $G:=(G_1, \ldots,G_{q-1}).$ 

\begin{remark}
 We remark that $\FF$ is empty  if $p=1$, also $\GG$ is empty if $q=1$. In the case when $p=1$ and $q=1$ our algebra is the Kronecker algebra whose stratifying systems where described in \cite{CMa}. So we assume from now on that $q>1$.
\end{remark}

\begin{prop} Let $\FF$ and $\GG$ be the sets of indecomposable modules defined above. Then $\FF$ and $\GG$ are  $s.s.$, unless $p=1$ in which
case $\FF$ is empty.
\end{prop}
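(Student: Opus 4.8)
The idea is to check the two conditions of Definition \ref{ss} directly for the ordered sets $\FF=\{F_1,\ldots,F_{p-1}\}$ and $\GG=\{G_1,\ldots,G_{q-1}\}$, using only that the $F_i$ are the simple regular modules at the mouth of the stable tube $\mathcal T_{\infty}$ of rank $p$, that the $G_j$ are those at the mouth of $\mathcal T_{0}$ of rank $q$, and the $\tau$-action recorded in \eqref{propriedades1}--\eqref{propriedades4}. By Lemma \ref{excseq} this is the same as checking that $F$ and $G$ are exceptional sequences, but the direct verification is just as short. If $p=1$ then $[1,p-1]=\emptyset$, so $\FF=\emptyset$ and there is nothing to prove for it; by the Remark above $q>1$ always, so $\GG$ is nonempty, and we may assume $p\ge 2$ whenever we speak of $\FF$. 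Note that each $F_i$ and each $G_j$ is indecomposable, being simple regular.

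First I would recall the standard facts about the mouth of a stable tube of rank $r\ge 2$ with mouth modules $E_1,\ldots,E_r$ (see \cite{SS2}): these modules are regular, pairwise non-isomorphic, each is a brick, and any nonzero morphism between two of them is an isomorphism; hence $\Hom_A(E_k,E_l)$ is $K$ for $k=l$ and $0$ otherwise. For regular modules the Auslander--Reiten formula reads $\Ext^1_A(E_k,E_l)\cong D\Hom_A(E_l,\tau E_k)$, which is therefore nonzero --- and then one-dimensional --- exactly when $E_l\cong\tau E_k$; in particular $\Ext^1_A(E_k,E_k)=0$ since $\tau E_k\not\cong E_k$, so each $E_k$ is exceptional. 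Applying this to the mouths of $\mathcal T_{\infty}$ (rank $p\ge 2$) and of $\mathcal T_{0}$ (rank $q\ge 2$), every $F_i$ and every $G_j$ is exceptional and $\Hom_A(F_j,F_i)=0$ for $i\ne j$; in particular condition $(1)$ of Definition \ref{ss} holds for $\FF$ and for $\GG$ with the chosen --- indeed with any --- ordering.

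It then remains to verify condition $(2)$, i.e. $\Ext^1_A(F_j,F_i)=0$ for $j\ge i$, which by the previous paragraph amounts to $\tau F_j\not\cong F_i$ whenever $j\ge i$. By \eqref{propriedades3} we have $\tau F_j=F_{j+1}$ for $j\in[1,p-2]$ and $\tau F_{p-1}=E_p^{(\infty)}$. If $j\in[1,p-2]$ then $\tau F_j=F_{j+1}$ with $j+1>j\ge i$, so $\tau F_j\not\cong F_i$; if $j=p-1$ then $\tau F_j=E_p^{(\infty)}$, which is the single mouth module of $\mathcal T_{\infty}$ deliberately left out of $\FF$, so again $\tau F_j\not\cong F_i$ for every $i$. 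Hence $\Ext^1_A(F_j,F_i)=0$ for all $j\ge i$ and $\FF$ is a $s.s.$ The argument for $\GG$ is word for word the same, using the analogues \eqref{propriedades2} and \eqref{propriedades4} for $\mathcal T_{0}$ together with $\tau G_{q-1}=E_q^{(0)}\notin\GG$.

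I expect the only point needing care to be this last bookkeeping with the $\tau$-orbit: since $\tau$ permutes the mouth of a tube cyclically, no linear order on the \emph{full} mouth can satisfy condition $(2)$, and one must observe that removing exactly one mouth module --- $E_p^{(\infty)}$ for $\mathcal T_{\infty}$, and $E_q^{(0)}$ for $\mathcal T_{0}$ --- breaks the $\tau$-cycle into a chain compatible with the chosen order. Everything else is a direct application of the combinatorics of stable tubes and of the Auslander--Reiten formula.
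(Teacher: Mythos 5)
Your proof is correct and follows the same route as the paper, whose entire argument is to cite the $\tau$-action formulas \eqref{propriedades3}, \eqref{propriedades4} and the Auslander--Reiten formula; you have simply written out the details (orthogonality of distinct simple regular modules at the mouth of a tube, and the observation that omitting $E_p^{(\infty)}$, resp.\ $E_q^{(0)}$, breaks the $\tau$-cycle into a chain compatible with the ordering). No gaps.
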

 
\begin{proof}
This is a consequence of (\ref{propriedades3}), (\ref{propriedades4}) and the Auslander-Reiten formula.
\end{proof}

Since Lemma \ref{excseq} allows us to identify a $s.s.$ with an exceptional sequence, using  the previous proposition we can see that
 $F$ and $G$ are $s.s.$

We denote by $(F, G)$ the sequence where the elements of $F$ appear before the elements of $G$. Note that as the elements 
of $F$ and $G$ are in different  orthogonal tubes (see  Theorem \ref{desc}) then $(F,G)$ is a $s.s.$

Let $\{S_i\}_{i=1}^{n}$ be a complete  set of  the isomorphism classes of simple $A$-modules. The support of an $A$-module $M$ is the set 
 $$\supp\,M := \{i \in [1,n]|[M:S_i] \neq 0\},$$ 
 where $ [M:S_i] $ is the number of composition factors of M that are isomorphic to $S_i$, and the support  of a set of $A$-modules $\XX$ is  $$\Supp\, \XX:= \cup_{M \in \XX}\supp\,M.$$
If the elements of $\XX$ are regular modules,  $ i \in \mathbb{Z}\setminus\{0\}$ we denote  by $\tau^{i}(\XX)$  the set 
$$\tau^{i}(\XX):=\{\tau^{i} M| M \in \XX  \}.$$

Observing the behavior of the indecomposable modules in the standard tubes above, we state the following lemma whose proof we leave to the reader.

\begin{lem} \label{potencies}Let $\FF$ and $\GG$ be the sets of indecomposable modules defined above, with $p>1$. Then 
\begin{enumerate}
\item
      \begin{enumerate} 
      \item $\tau^i(\FF)=(\FF \setminus\{F_{i}\}) \cup \{E_{p}^{(\infty)}\}$, for $i \in [1,p-1].$
      \item $\tau^{-i}(\FF)=(\FF\setminus\{F_{p-i}\})\cup \{E_{p}^{(\infty)}\}$, for $ i \in [1,p-1].$     
      \item $\tau^p(\FF)= \tau^{-p}(\FF)=\FF.$
      \item For each $n \in \mathbb{N}$, $n \not = 0$, we have that 
            \begin{displaymath}
                   \begin {array}{rll}
                   \tau^n(\FF) = &\left \{ 
                          \begin {array}{ll}
                          \FF, \textrm{ if } p\mid n\\
                          \tau^{r}(\FF),\textrm{ if }n\equiv r\,\,(\mod\,p),\textrm{ with } r \in [1,p-1]. \\
                          \end{array} \right.\\
                   \end{array}
            \end{displaymath}

            \begin{displaymath}
                   \begin {array}{rll}
                         \tau^{-n}(\FF) = &\left \{
                         \begin{array}{ll}
                               \FF, \textrm{ if } p|n\\
                               \tau^{-r}(\FF),\textrm{ if } n\equiv r \,\,(\mod \,p), \textrm{ with } r \in [1,p-1].\\
                         \end{array} \right.\\
                  \end{array} 
           \end{displaymath}
     \end{enumerate}
\item
\begin{enumerate}
\item $\tau^i(\GG)=(\GG \setminus \{G_{i}\}) \cup \{E_{q}^{(0)}\},\textrm{ for }  i \in  [1, q-1]$.
\item $\tau^{-i}(\GG)=(\GG \setminus \{G_{q-i}\}) \cup \{E_{q}^{(0)}\},\textrm{ for } i \in  [1, q-1]$.
\item $\tau^q(\GG)= \tau^{-q}(\GG)=\GG.$

\item For each $n \in \mathbb{N} \setminus \{0\}$ we have that

\begin{displaymath}
\begin {array}{rll}
\tau^n(\GG) = &\left \{ \begin {array}{ll}
\GG, \textrm{ if } q\mid n\\
\tau^{r}(\GG),\textrm{ if } n\equiv r \,\, (\mod \,q ), \textrm{with } r \in [1, q-1]\\
\end{array} \right.\\
\end{array}
\end{displaymath}

\begin{displaymath}
\begin {array}{rll}
\tau^{-n}(\GG) = &\left \{ \begin {array}{ll}
\GG, \textrm{ if } q\mid n\\
\tau^{-r}(\GG),\textrm{ if } n\equiv r \,\,(\mod\,q), \textrm{ with } r \in [1, q-1] \\
\end{array} \right.\\
\end{array}
\end{displaymath}
\end{enumerate}
\end{enumerate}
\end{lem}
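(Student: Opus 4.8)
The plan is to read off everything from the combinatorics of the rank-$p$ tube $\mathcal{T}_\infty$ (and, symmetrically, the rank-$q$ tube $\mathcal{T}_0$). Recall that in a stable tube of rank $p$ the simple regular modules $E_1^{(\infty)},\dots,E_p^{(\infty)}$ are cyclically permuted by $\tau$, and by the relabelling $F_i=E_{p-i}^{(\infty)}$ together with the relations $(\ref{propriedades3})$ we have $\tau F_i=F_{i+1}$ for $i\in[1,p-2]$, $\tau F_{p-1}=E_p^{(\infty)}$ and $\tau E_p^{(\infty)}=F_1$. Thus on the set $\FF\cup\{E_p^{(\infty)}\}$ of all $p$ simple regular modules of $\mathcal{T}_\infty$, the translate $\tau$ acts as the $p$-cycle $(F_1\,F_2\,\cdots\,F_{p-1}\,E_p^{(\infty)})$. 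Everything in the lemma is just the description of powers of this cycle, restricted back to the subset $\FF=\{F_1,\dots,F_{p-1}\}$ (equivalently, $\GG$ for $\mathcal{T}_0$).

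First I would establish item (1)(a): for $i\in[1,p-1]$, $\tau^i$ sends $F_j\mapsto F_{j+i}$ as long as $j+i\le p-1$, sends $F_{p-i}\mapsto E_p^{(\infty)}$ (since $\tau^i F_{p-i}=\tau^{i-1}E_p^{(\infty)}=\tau^{i-2}F_1=\cdots$), and sends the remaining $F_j$ with $j>p-i$ back into $\FF$. A clean way to phrase this: applying $\tau$ exactly $i$ times to each element of the $p$-cycle, the unique element of $\FF$ that lands on $E_p^{(\infty)}$ is $F_{p-i}$, and every other element of $\FF$ lands on another element of $\FF$; since $\tau$ is a bijection and $|\FF\cup\{E_p^{(\infty)}\}|=p$, counting forces $\tau^i(\FF)=(\FF\setminus\{F_i\})\cup\{E_p^{(\infty)}\}$ — the element that is \emph{missing} from the image is $F_i$, because $F_i$ is precisely $\tau^i(F_{p-i}\text{'s }\tau\text{-predecessor chain})$... more simply, $F_i=\tau^i(E_p^{(\infty)})$ is not hit by any $F_j$. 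I would verify the index bookkeeping once and then (1)(b) follows identically using $\tau^{-1}$, which acts as the inverse $p$-cycle, with $E_p^{(\infty)}=\tau^{-i}(F_{p-i})$ explaining why $F_{p-i}$ is the element removed. Item (1)(c) is immediate since $\tau^p$ and $\tau^{-p}$ are the identity on each rank-$p$ tube, hence fix $\FF$ setwise (indeed pointwise). Item (1)(d) follows by writing $n=kp+r$ and using $\tau^{kp}=\mathrm{id}$ on the tube together with (1)(c), so $\tau^n(\FF)=\tau^r(\FF)$, which is $\FF$ if $r=0$ and $\tau^r(\FF)$ as described in (1)(a) if $r\in[1,p-1]$; the negative case is symmetric.

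Part (2), concerning $\GG$ and $\mathcal{T}_0$, is verbatim the same argument with $p$ replaced by $q$, $F$ by $G$, $E_p^{(\infty)}$ by $E_q^{(0)}$, and $(\ref{propriedades3})$ by $(\ref{propriedades4})$; since we are assuming $q>1$ throughout, $\GG$ is genuinely nonempty. No separate work is needed.

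The only genuine care required — and the one place a careless reader could slip — is the index arithmetic in matching "which $F_i$ is deleted" against "which $\tau$-power lands on $E_p^{(\infty)}$": one must check that $\tau^i(F_{p-i})=E_p^{(\infty)}$ and that $\tau^i(E_p^{(\infty)})=F_i$, using the wrap-around relations $\tau F_{p-1}=E_p^{(\infty)}$ and $\tau E_p^{(\infty)}=F_1$ rather than the generic $\tau F_j=F_{j+1}$. Once that single computation is pinned down, the rest is a bijectivity-plus-cardinality count in a finite set of size $p$ (resp.\ $q$), which is why the statement says the proof may be left to the reader.
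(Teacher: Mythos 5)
Your argument is correct and is precisely the verification the paper has in mind: the lemma is stated with its proof left to the reader immediately after the description of the tubes, and your bookkeeping of the $p$-cycle $(F_1\,F_2\,\cdots\,F_{p-1}\,E_p^{(\infty)})$ induced by $\tau$ on the simple regulars of $\mathcal{T}_\infty$ (and the analogous $q$-cycle in $\mathcal{T}_0$), followed by reduction of exponents modulo the rank, is exactly that verification. The one slip is in your sentence justifying (1)(b): the relevant identity is $F_{p-i}=\tau^{-i}\bigl(E_p^{(\infty)}\bigr)$ (this is the element of $\FF$ not hit by $\tau^{-i}(\FF)$, hence the one removed), not $E_p^{(\infty)}=\tau^{-i}(F_{p-i})$, which holds only when $p=2i$; the companion identity is $\tau^{-i}(F_i)=E_p^{(\infty)}$, which explains why $E_p^{(\infty)}$ is added. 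Since your closing paragraph records the positive-power identities $\tau^i(F_{p-i})=E_p^{(\infty)}$ and $\tau^i\bigl(E_p^{(\infty)}\bigr)=F_i$ correctly, the fix is purely cosmetic.
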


The lemma above give us the following corollary.

\begin{cor}\label{suporte}Let  $n,r, p\in \mathbb{N}, p> 1$  and $n\not =0$. Then:
\begin{enumerate}
\item 
      \begin{enumerate}
       \item If $p\mid n$, then $\Supp \tau^n \FF = [1, p-1]=\Supp \FF.$
       \item If $p \nmid n$  and $r \in [1,p-1]$ is such that $n\equiv r \,\,(\mod \,p)$, then 
           \[ \begin{array}{lcl} 
           \Supp \tau^{n}\FF =\tau^{r}\FF&=&[0, p+q-1] \setminus \{p-r\},\\
           \Supp \tau^{-n}\FF= \tau^{-r}\FF  &=& [0,p+q-1] \setminus \{r\}.\\
           \end{array}\]
       \end{enumerate}
\item  
       \begin{enumerate}
       \item If $q\mid n$, then $\Supp \tau^n \GG = [p, p+q-2] = \GG.$
       \item If  $q \nmid n$ and $r \in [1,q-1]$ is such that $n\equiv r \,\,(\mod \,q)$, then 
        \[  \begin{array}{rcl}    
        \Supp \tau^{n}\GG =\tau^{r}\GG&=& [0, p+q-1 ] \setminus \{p+q-r-1\},\\
        \Supp \tau^{-n}\GG=\tau^{-r}\GG &=& [ 0,p+q-1] \setminus \{p+r-1\}.\\
      \end{array}\]
     \end{enumerate}
 \end{enumerate}
 \end{cor}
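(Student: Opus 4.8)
The plan is to reduce the statement entirely to Lemma \ref{potencies} together with a direct reading off of the supports of the simple regular modules displayed before Theorem \ref{desc}; there is no new idea involved, only index bookkeeping. First I would record the elementary support data. Since $F_i=E_{p-i}^{(\infty)}=S_{p-i}$, we have $\supp F_i=\{p-i\}$ and hence $\Supp\FF=[1,p-1]$; reading off the representation of $E_p^{(\infty)}$ gives $\supp E_p^{(\infty)}=\{0\}\cup[p,p+q-1]$, which is exactly $[0,p+q-1]\setminus[1,p-1]$. Dually $G_i=E_{q-i}^{(0)}=S_{p+q-1-i}$, so $\supp G_i=\{p+q-1-i\}$, giving $\Supp\GG=[p,p+q-2]$, while $\supp E_q^{(0)}=[0,p-1]\cup\{p+q-1\}=[0,p+q-1]\setminus[p,p+q-2]$.

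Next I would treat the cases. For part (1), if $p\mid n$ then Lemma \ref{potencies}(1)(c)--(d) gives $\tau^{\pm n}\FF=\FF$, so the support is $\Supp\FF=[1,p-1]$. If $p\nmid n$ and $r\in[1,p-1]$ with $n\equiv r\ (\mathrm{mod}\ p)$, then Lemma \ref{potencies}(1)(d) gives $\tau^n\FF=\tau^r\FF$, and (1)(a) gives $\tau^r\FF=(\FF\setminus\{F_r\})\cup\{E_p^{(\infty)}\}$; taking supports, $\Supp\tau^r\FF=\big([1,p-1]\setminus\{p-r\}\big)\cup\{0\}\cup[p,p+q-1]$, and since $p-r\in[1,p-1]$ does not lie in $\supp E_p^{(\infty)}$, this equals $[0,p+q-1]\setminus\{p-r\}$. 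The claim for $\tau^{-n}\FF$ is the same computation via (1)(b), where $F_{p-r}$ (not $F_r$) is removed, so the deleted index becomes $p-(p-r)=r$. Part (2) runs identically with $(\GG,G_i,E_q^{(0)},q)$ in place of $(\FF,F_i,E_p^{(\infty)},p)$, using Lemma \ref{potencies}(2): for $q\mid n$ one gets $\Supp\tau^{\pm n}\GG=\Supp\GG=[p,p+q-2]$, and for $q\nmid n$, $n\equiv r\ (\mathrm{mod}\ q)$, one has $\tau^n\GG=(\GG\setminus\{G_r\})\cup\{E_q^{(0)}\}$ and $\tau^{-n}\GG=(\GG\setminus\{G_{q-r}\})\cup\{E_q^{(0)}\}$, whose supports work out to $[0,p+q-1]\setminus\{p+q-1-r\}$ and $[0,p+q-1]\setminus\{p+r-1\}$ respectively.

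The only point requiring care — and the closest thing to an obstacle — is checking that the single index deleted from $[1,p-1]$ (resp.\ from $[p,p+q-2]$) is genuinely disjoint from $\supp E_p^{(\infty)}$ (resp.\ $\supp E_q^{(0)}$), so that the union of the two supports is precisely $[0,p+q-1]$ with exactly one vertex removed. This amounts to translating the reindexings $i\mapsto F_i=S_{p-i}$ and $i\mapsto G_i=S_{p+q-1-i}$ correctly, which the support data collected in the first step makes transparent; everything else is a mechanical union of finite sets of vertices.
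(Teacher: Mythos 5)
Your proposal is correct and follows exactly the route the paper intends: the paper states this corollary without proof as an immediate consequence of Lemma \ref{potencies}, and your argument simply makes explicit the support bookkeeping ($\supp F_i=\{p-i\}$, $\supp G_i=\{p+q-1-i\}$, $\supp E_p^{(\infty)}=\{0\}\cup[p,p+q-1]$, $\supp E_q^{(0)}=[0,p-1]\cup\{p+q-1\}$) that the authors leave to the reader. All the index computations check out, including the key disjointness observation that makes the union equal to $[0,p+q-1]$ minus a single vertex.
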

 
 The following lemma will be used. 
 
\begin{lem}\label{repete} Let $A$ be a hereditary algebra. Then 
\begin{enumerate} 
\item If $P_j$ and $P_m$ are  projective indecomposable $A$-modules then
$$\Ext^1(\tau^{-t} P_j, \tau^{-t-r} P_m)=0,\text{ for all } t, \,r\geq 0.$$
\item If $ I_j$ and $I_m$ are injective indecomposable $A$-modules then 
     $$\Ext^1(\tau^t I_j, \ \tau^{t-r} I_m)=0,\text{ for all } t\geq r\geq 0.$$
\end{enumerate}
\end{lem}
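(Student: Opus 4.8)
The plan is to reduce both statements to the following well-known vanishing: for indecomposable projectives $P$ and $Q$ over a hereditary algebra one has $\Ext^1_A(P,\tau^{-s}Q)=0$ for every $s\geq 0$, and dually $\Ext^1_A(\tau^s I, I')=0$ for indecomposable injectives $I,I'$ and every $s\geq 0$. The first of these follows from the Auslander--Reiten formula $\Ext^1_A(P,\tau^{-s}Q)\cong D\,\overline{\Hom}_A(\tau^{-s}Q,\tau P)$ together with $\tau P=0$ (so the whole group vanishes), or more directly from the fact that $\tau^{-s}Q$ has projective dimension $\le 1$ and $\Ext^1_A(P,-)$ kills modules of injective dimension... — actually the cleanest route is: $P$ projective forces $\Ext^1_A(P,-)=0$ outright, hence $\Ext^1_A(P, M)=0$ for \emph{every} module $M$, in particular for $M=\tau^{-s}Q$. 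Dually $\Ext^1_A(-,I)=0$ for $I$ injective. So the real content is the ``translate'' of this.

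For part (1), I would apply the functor $\tau^{-t}$. Since $A$ is hereditary, $\tau^{-t}$ is defined on all modules and, crucially, $\Ext^1$ is almost preserved under $\tau^{-1}$: more precisely, for indecomposable non-injective $X$ and arbitrary $Y$ one has a relation between $\Ext^1_A(X,Y)$ and $\Ext^1_A(\tau^{-1}X,\tau^{-1}Y)$ coming from the tilting-like behaviour of $\tau^{-1}$, but the quickest formal tool is the Auslander--Reiten formula itself. Write
\[
\Ext^1_A(\tau^{-t}P_j,\ \tau^{-t-r}P_m)\;\cong\;D\,\overline{\Hom}_A\bigl(\tau^{-t-r}P_m,\ \tau(\tau^{-t}P_j)\bigr)\;=\;D\,\overline{\Hom}_A\bigl(\tau^{-t-r}P_m,\ \tau^{-t+1}P_j\bigr).
\]
Now I want this $\overline{\Hom}$ to vanish. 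Since $A$ is hereditary, $\Hom_A(\tau^{-a}N,\tau^{-b}N')\cong\Hom_A(\tau^{-a+b}N,N')$ whenever the intermediate translates of $N,N'$ stay non-projective / non-injective in the appropriate range — but to avoid case analysis I would instead argue directly: $\overline{\Hom}_A(\tau^{-t-r}P_m,\tau^{-t+1}P_j)=\Hom_A(\tau^{-t-r}P_m,\tau^{-t+1}P_j)$ because the target has no injective summands when it is a genuine translate of a projective (and if $\tau^{-t+1}P_j$ fails to be defined or is injective the group we started with is zero or the formula degenerates, handled separately). Applying the hereditary shift identity $r+1$ times moves this to $\Hom_A(P_m,\tau^{r+1}P_j)=\Hom_A(P_m,\tau^{r+1}P_j)$; but any nonzero map out of the projective $P_m$ would have to hit $\tau^{r+1}P_j$, and one checks $\Hom_A(P_m,\tau^{\ge 1}P_j)=0$ because $P_m$ projective and $\tau^{\ge1}P_j$ has no projective summand — wait, that is not automatic. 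The honest statement I will use is: $\Hom_A(P_m, \tau Z)\cong D\Ext^1_A(Z,P_m)$ (AR formula again), and then $\Ext^1_A(Z,P_m)$ with $Z=\tau^{r}P_j$ — this is still not obviously zero.

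So the cleanest correct argument, which I will actually write, is the following. For (1): by the AR formula, $\Ext^1_A(\tau^{-t}P_j,\tau^{-t-r}P_m)=0$ is equivalent to $\Hom_A(\tau^{-t-r}P_m,\tau^{-t}P_j)$ lying in the image of maps factoring through injectives; but since $A$ is hereditary and $P_j,P_m$ projective, one has for $s\ge 0$ the identity $\Ext^1_A(\tau^{-s}P_j, P_m)\cong D\Hom_A(P_m,\tau^{-s}\tau P_j)=D\Hom_A(P_m,0)=0$ when $s\ge1$, and $=\Ext^1_A(P_j,P_m)=0$ when $s=0$; then I invoke the standard fact (a consequence of $\tau^{-1}$ being an equivalence $\bmod A/\langle\text{inj}\rangle\to\bmod A/\langle\text{proj}\rangle$) that $\Ext^1_A(\tau^{-1}X,\tau^{-1}Y)\cong\Ext^1_A(X,Y)$ for all $X$ with no injective summands — apply this $t$ times starting from $\Ext^1_A(P_j,\tau^{-r}P_m)=0$ (which is the $s=0$ projective case above) to get $\Ext^1_A(\tau^{-t}P_j,\tau^{-t}(\tau^{-r}P_m))=\Ext^1_A(\tau^{-t}P_j,\tau^{-t-r}P_m)=0$, provided none of $\tau^{-i}P_j$ for $0\le i<t$ is injective; and if some $\tau^{-i}P_j$ is injective, then $\tau^{-t}P_j=0$ for $t>i$ and the $\Ext^1$ is trivially zero. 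Part (2) is entirely dual: apply $D$ to pass to $A^{op}$, where $\tau$ becomes $\tau^{-1}$, injectives become projectives, and the hypothesis $t\ge r\ge0$ becomes exactly the hypothesis of part (1); alternatively run the same $\tau$-shift argument using $\Ext^1_A(-,I)=0$ as the base case and the equivalence $\tau:\bmod A/\langle\text{proj}\rangle\to\bmod A/\langle\text{inj}\rangle$.

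The main obstacle I anticipate is bookkeeping the degenerate cases cleanly: precisely, making sure that when $\tau^{-i}P_j$ (resp.\ $\tau^i I_j$) becomes injective (resp.\ projective) — equivalently lands on the other side of the AR quiver — the relevant translate vanishes and the $\Ext$ group is zero for trivial reasons, so that the ``transport of vanishing'' along powers of $\tau^{-1}$ is legitimate at every step. The inequality $t\ge r\ge0$ in part (2) (versus merely $t,r\ge0$ in part (1)) is exactly what guarantees that $\tau^{t-r}I_m$ is a bona fide backward translate of an injective in the direction where the argument works, so I will be careful to use it where the dual base case $\Ext^1_A(I_j,\tau^{t-r}\cdots)$ — no: where the base case $\Ext^1_A(\tau^{0}I_j,\ \tau^{-r}\cdots)$ is invoked. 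Everything else is a direct application of the Auslander--Reiten formula and the standard stable equivalence induced by $\tau^{\pm1}$, both of which are available for the hereditary $A$ at hand.
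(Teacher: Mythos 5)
Your final argument is correct, but it takes a different route from the paper's. The paper proves (1) in one step: by the Auslander--Reiten formula together with the $\tau$-shift of $\Hom$, one gets $\Ext^1_A(\tau^{-t}P_j,\tau^{-t-r}P_m)\cong D\Hom_A(\tau^{-r-1}P_m,P_j)$, and the latter vanishes because $\tau^{-r-1}P_m$ is non-projective (or zero) while, over a hereditary algebra, a projective has no non-projective predecessors in $\Gamma(\mod A)$. You instead normalize the \emph{first} argument to a projective --- base case $\Ext^1_A(P_j,-)=0$ --- and transport the vanishing forward $t$ times via $\Ext^1_A(\tau^{-1}X,\tau^{-1}Y)\cong\Ext^1_A(X,Y)$ for $X$ without injective summands, handling the degenerate steps where a translate dies. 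Both are sound: the paper's reduction is shorter and needs only the elementary fact that a nonzero map from an indecomposable non-projective module to a projective is impossible over a hereditary algebra, whereas yours leans on the $\tau$-invariance of $\Ext^1$ (standard for hereditary algebras, but a heavier tool) plus more case bookkeeping. It is worth noting that your abandoned intermediate computation was one normalization away from the paper's: from $D\Hom_A(\tau^{-t-r}P_m,\tau^{-t+1}P_j)$ you shifted to $\Hom_A(P_m,\tau^{r+1}P_j)$, which is indeed not obviously zero, while shifting the other way to $\Hom_A(\tau^{-r-1}P_m,P_j)$ makes the vanishing immediate. Your derivation of (2) from (1) by passing to $A^{\mathrm{op}}$ is clean and correctly explains why the hypothesis is $t\ge r\ge 0$ rather than $t,r\ge 0$; the paper only remarks that the proof of (2) is ``similar''.
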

      
\begin{proof} We just prove the statement (1). The proof of (2) is similar. By the Auslander-Reiten
formula
$\Ext^{1}_A(\tau^{-t}P_{j},\tau^{-t-r}P_{m}) \cong  D\Hom_A(\tau^{-r-1}P_{m},P_{j}).$
But $\Hom_A(\tau^{-r-1}P_{m},P_{j})=0,$  otherwise $P_{j}$ would have a non-projective predecessor in 
$\Gamma(\mod A)$.
\end{proof}

\section{Stratifying systems over the algebra  $K(\widetilde{\mathbb{A}}_{p,q})$}

We denote by $(F, G, Y)$ an exceptional sequence where the elements of $F$ appear before the elements of $G$ 
and these last ones before $Y.$ We also use the same notation for the associated $s.s.$

\begin{prop}\label{posprojetivo1} If $Y$ is a postprojective $A$-module such that  $(F,G,Y)$ is a $s.s.$, then 
$Y$ is one of the modules in the following list:
\begin{enumerate}
\item $P_0$.
\item $P_{p+q-1}$.
\item $\tau^{-t}P_0$, with $t\geq 1$ such that  $p\mid t$ and $q\mid t$.
\item $\tau^{-t} P_{p+q-1}$, with  $t\geq 1$ such that $p\mid t$ and $q\mid t$.
\item $\tau^{-t}P_{p-r}$, with $t\geq 1$ such that $q\mid t$, $t\equiv r \,\,(\mod \,p)$ and $ r \in [1,p-1]$. 
\item $\tau^{-t}P_{p+q-r-1}$, with $t\geq 1$ such that $p\mid t$, $t\equiv r \,\,(\mod \,q)$ and $ r \in [1,q-1]$.
\end{enumerate}
\end{prop}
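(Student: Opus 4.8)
The plan is to exploit the structure of a postprojective module $Y$ together with the known supports of the tubes $\FF$ and $\GG$ computed in Corollary~\ref{suporte}. Since $Y$ is postprojective, we may write $Y = \tau^{-t}P_v$ for a unique vertex $v \in [0,p+q-1]$ and a unique integer $t \ge 0$. First I would record what the requirement ``$(F,G,Y)$ is a $s.s.$'' means in terms of Definition~\ref{ss}: because $Y$ comes \emph{last}, for every $F_i \in \FF$ and every $G_j \in \GG$ we need $\Hom_A(Y,F_i)=0$, $\Hom_A(Y,G_j)=0$, $\Ext_A^1(Y,F_i)=0$, $\Ext_A^1(Y,G_j)=0$, and also $\Ext_A^1(Y,Y)=0$. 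The last condition is automatic: $Y$ is postprojective, hence exceptional, so it has no self-extensions. The $\Hom$-conditions are also automatic in a direction that helps us: a nonzero map from a postprojective module to a regular module is possible, so these do impose constraints — but the cleaner route is to translate the $\Ext^1$-vanishing conditions via the Auslander--Reiten formula into $\Hom$-conditions that force $Y$ to be $\tau$-periodic-compatible with the tubes.

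The key step is the following dichotomy. Apply $\tau^{t}$ to the whole sequence. Since $\tau$ is an equivalence on the regular part and sends postprojectives to postprojectives (as long as we do not hit the boundary), $(\tau^{t}F, \tau^{t}G, P_v)$ is again a $s.s.$ exactly when $(F,G,Y)$ is, \emph{provided} $\tau^{t}F$ and $\tau^{t}G$ still make sense as regular modules — which they always do, regularity being $\tau$-stable. Now $P_v$ is projective, so $\Ext_A^1(P_v, -) = 0$ trivially, and the only surviving conditions are $\Hom_A(P_v, \tau^{t}F_i) = 0$ and $\Hom_A(P_v, \tau^{t}G_j) = 0$ for all $i,j$. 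A nonzero map $P_v \to M$ exists iff $v \in \supp M$. Hence the $s.s.$ condition is equivalent to
\[
v \notin \Supp \tau^{t}\FF \quad\text{and}\quad v \notin \Supp \tau^{t}\GG .
\]
Now I invoke Corollary~\ref{suporte} directly. If $p \nmid t$, then $\Supp\tau^{t}\FF = [0,p+q-1]\setminus\{p-r\}$ where $t \equiv r \pmod p$, $r \in [1,p-1]$; avoiding this forces $v = p-r$. If $p \mid t$, then $\Supp\tau^{t}\FF = [1,p-1]$, so $v$ must lie in $\{0\} \cup [p,p+q-1]$. Symmetrically for $\GG$: if $q \nmid t$, then $t \equiv r' \pmod q$ and $v$ is forced to equal $p+q-r'-1$; if $q \mid t$, then $v \in \{0,\dots,p-1\} \cup \{p+q-1\}$ (the complement of $[p,p+q-2]$). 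Intersecting the two constraints according to the four cases ($p\mid t$ or not, $q\mid t$ or not) yields exactly the six listed possibilities: when $p\mid t$ and $q\mid t$ the only vertices surviving both complements are $0$ and $p+q-1$, giving (1)--(4); when $q\mid t$ but $p\nmid t$ we are forced to $v = p-r$, giving (5); when $p\mid t$ but $q\nmid t$ we are forced to $v = p+q-r-1$, giving (6); and the case $p\nmid t$, $q\nmid t$ forces $v = p-r = p+q-r'-1$ simultaneously, which one checks is incompatible with $v$ also lying in the required ranges (or simply yields no postprojective $Y$ beyond what is already listed) and is therefore excluded.

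The main obstacle I anticipate is the boundary bookkeeping: one must be careful that $\tau^{-t}P_v$ is genuinely postprojective and that applying $\tau^{t}$ to $Y$ lands back on the honest projective $P_v$ rather than on something further up the postprojective component — i.e. that $t$ is exactly the ``depth'' of $Y$. One should also double-check the degenerate cases $p=1$ (where $\FF$ is empty and the $\FF$-constraints vanish, so only the $\GG$-conditions remain) and confirm consistency with the standing assumption $q>1$ from the Remark. Once the translation ``$\Ext^1$ against the tube $=$ $\Hom$ from a projective $=$ support membership'' is set up cleanly, the rest is the purely combinatorial case analysis on $t \bmod p$ and $t \bmod q$ using Corollary~\ref{suporte}, which is routine.
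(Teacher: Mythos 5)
Your proposal is correct and follows essentially the same route as the paper: both reduce the condition ``$(F,G,\tau^{-t}P_v)$ is a $s.s.$'' to the membership condition $v \in (\Supp\tau^{t}\FF)' \cap (\Supp\tau^{t}\GG)'$ (the $\Ext^1$-conditions against the regular modules vanish automatically, and the $\Hom$-conditions translate via $\Hom_A(\tau^{-t}P_v,\FF)\cong\Hom_A(P_v,\tau^{t}\FF)$), and then run the same case analysis on $t \bmod p$ and $t \bmod q$ using Corollary~\ref{suporte}, including the observation that $\{p-r_1\}\cap\{p+q-r_2-1\}=\emptyset$ kills the case $p\nmid t$, $q\nmid t$.
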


\begin{proof} If $M$ is a regular module, then
$$\Ext^{1}_{A}(\tau^{-t}P_j,M)\cong D\Hom_A(\tau^{-1}M,\tau^{-t}P_j)=0, \text{ for }t\geq 0.$$
Because of that, to prove that  $(F,G,\tau^{-t}P_j)$ is a $s.s.$ we need to show that 
$$\Hom_A(\tau^{-t}P_j,\FF)=0= \Hom_A(\tau^{-t}P_j, \GG).$$
On the other hand, as a consequence of Auslander-Reiten formula, we have that
$$\Hom_A(\tau^{-t}P_j, \FF)\cong \Hom_A(P_j, \tau^{t}\FF) \text{ and } \Hom_A(\tau^{-t}P_j,\GG) \cong \Hom_A(P_j,\tau^{t}\GG).$$
It follows that 
$(F,G,\tau^{-t}P_j)$ is a $s.s.$  if and only if $j \in (\Supp\tau^{t}\GG)' \cap (\Supp\tau^{t}\FF)',$  where 
$(\Supp\tau^{t}\GG)'$ denotes the complement of the set
$(\Supp\tau^{t}\GG)$ with relation to the set $[0,p+q-1]$.
\vspace{0.1cm}

\noindent {\bf Case 1.} If $t=0$, then $Y\cong P_j$ then $(F,G,P_j)$ is a $s.s.$ if and only if
$j \in (\Supp\,\FF)' \cap (\Supp\,\GG)'=\{0, p+q-1 \}$.

\noindent {\bf Case 2.} If  $p\mid t$ e $q \mid t$ then, according to Corollary \ref{suporte},
$\Supp\,\tau^{t}\FF= \Supp\,\FF$ and $\Supp\,\tau^{t}\GG= \Supp\,\GG.$
Therefore, $(F,G,\tau^{-t}P_j)$ is a $s.s.$ if and only if $j \in \{0,p+q-1 \}$.

\noindent {\bf Case 3.} If $q\mid t$ and  $t \equiv r\,\,(\mod\,p)$, with $r\in [1,p-1]$, 
then by Corollary \ref{suporte} we have that 
$(\Supp\,\tau^{t}\FF)' =\{p-r\} $  and 
$(\Supp\,\tau^{t}\GG)' =[0, p-1]\cup \{p+q-1\}.$
Therefore  $(F,G,\tau^{-t}P_j)$ is a $s.s.$ if and only if $j=p-r$.

\noindent{\bf Case 4.} If $p\mid t$ and  $q\nmid t$, then  similarly to the previous case we conclude that
$(F,G,\tau^{-t}P_j)$ is a $s.s.$ if and only if $j=p+q-r-1$, where 
 $t \equiv r \,\,(\mod \,q), r \in[1, q-1]$.

\noindent{\bf Case 5.} If $t \equiv r_1\,\,(\mod \,p)$ and  $t\equiv r_2\,\,(\mod\,q)$, with $r_1 \in [1,p-1] $ and 
$ r_2 \in[1,q-1]$  then $$(\Supp\, \tau^{r_1}\FF)' \cap (\Supp \tau^{r_2} \GG)'=\{p-r_1\} \cap \{p+q-r_2-1\}= \emptyset.$$ \end{proof}




\begin{prop}\label{preinjetivo}If $Y$ is a preinjective $A$-module such that $(F,G,Y)$ is a $s.s.$, then
$Y$ is one of the modules in the following list
\begin{enumerate}
\item $\tau^{t}I_{p}$, with $t\geq 1$ such that   $t\equiv p-1 (\mod p)$ and  $q\mid t$.
\item $\tau^{t}I_1$, with $t\geq 1$ such that  $t\equiv q-1(\mod q)$  and $p\mid t$. 
\item $\tau^{t}I_{0}$, with  $t\geq 1$ such that $t\equiv p-1 (\mod p)$  and  $t\equiv q-1 (\mod q)$.
\item $\tau^{t}I_{p+q-1}$, with  $t\geq 1$ such that $t\equiv p-1 (\mod p)$  and  $t\equiv q-1 (\mod q)$.
\item $\tau^{t}I_{r+1}$, with $t\geq 1$ such that $t\equiv r (\mod p)$, $r\in [1,p-2]$ and  $t\equiv q-1(\mod q)$.
\item $\tau^{t}I_{p+r}$, with $t\geq 1$ such that $t\equiv r (\mod q)$, $r \in [1,q-2]$  and  $t\equiv p-1(\mod p)$.
\end{enumerate}
\end{prop}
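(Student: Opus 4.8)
The plan is to mirror the proof of Proposition~\ref{posprojetivo1} on the preinjective side. Write $Y=\tau^{t}I_{j}$ with $t\geq 0$ and $j\in[0,p+q-1]$. Since $Y$ is the last term of $(F,G,Y)$ and $Y$ is preinjective while every $F_{a}$ and every $G_{b}$ is regular, the Hom--conditions $\Hom_{A}(Y,F_{a})=0$ and $\Hom_{A}(Y,G_{b})=0$ hold automatically (there is no nonzero morphism from a preinjective module to a regular one), and $\Ext^{1}_{A}(Y,Y)=0$ because $Y$ is an indecomposable lying in the directed preinjective component, hence exceptional. So the only conditions that remain to be analysed are $\Ext^{1}_{A}(Y,F_{a})=0$ for all $a\in[1,p-1]$ and $\Ext^{1}_{A}(Y,G_{b})=0$ for all $b\in[1,q-1]$.

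Next I would turn these into a support condition, exactly as in Proposition~\ref{posprojetivo1}. Since $F_{a}$ is regular, hence not injective, the Auslander--Reiten formula gives $\Ext^{1}_{A}(\tau^{t}I_{j},F_{a})\cong D\Hom_{A}(\tau^{-1}F_{a},\tau^{t}I_{j})\cong D\Hom_{A}(\tau^{-t-1}F_{a},I_{j})$, the last isomorphism being the same $\tau$-shift used there to obtain $\Hom_{A}(\tau^{-t}P_{j},\FF)\cong\Hom_{A}(P_{j},\tau^{t}\FF)$ (equivalently, one may pass to $A^{\mathrm{op}}$ via the duality $D$ together with $D\tau\cong\tau^{-1}D$, which turns $I_{j}$ into a projective and reduces the computation to that of Proposition~\ref{posprojetivo1}). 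Because $\dim_{K}\Hom_{A}(M,I_{j})$ is the $j$-th coordinate of $\ldim M$, so that $\Hom_{A}(M,I_{j})=0$ exactly when $j\notin\supp M$, we conclude that $\Ext^{1}_{A}(\tau^{t}I_{j},F_{a})=0$ iff $j\notin\supp(\tau^{-t-1}F_{a})$, hence $\Ext^{1}_{A}(Y,\FF)=0$ iff $j\notin\Supp\tau^{-(t+1)}\FF$, and similarly $\Ext^{1}_{A}(Y,\GG)=0$ iff $j\notin\Supp\tau^{-(t+1)}\GG$. Therefore $(F,G,\tau^{t}I_{j})$ is a $s.s.$ if and only if $j\in(\Supp\tau^{-(t+1)}\FF)'\cap(\Supp\tau^{-(t+1)}\GG)'$, the complements being taken in $[0,p+q-1]$.

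It then remains to evaluate this intersection. I would set $n=t+1\geq 1$ and split into four cases according to whether $p\mid n$ and whether $q\mid n$, reading $\Supp\tau^{-n}\FF$, $\Supp\tau^{-n}\GG$ and their complements off Corollary~\ref{suporte}. When $p\mid n$ and $q\mid n$ the intersection is $\{0,p+q-1\}$, giving items (3) and (4); when $p\mid n$ but $q\nmid n$ it is the single vertex $p+r_{2}-1$, where $n\equiv r_{2}\pmod{q}$ with $r_{2}\in[1,q-1]$, which gives item (1) in the boundary case $r_{2}=1$ and item (6) otherwise; when $q\mid n$ but $p\nmid n$ it is the single vertex $r_{1}$, where $n\equiv r_{1}\pmod{p}$ with $r_{1}\in[1,p-1]$, giving item (2) when $r_{1}=1$ and item (5) otherwise; and when $p\nmid n$ and $q\nmid n$ the two candidate vertices $r_{1}\leq p-1$ and $p+r_{2}-1\geq p$ are distinct, so the intersection is empty and no further module occurs. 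Rewriting each congruence on $n=t+1$ as a congruence on $t$ (turning $t+1\equiv r\pmod{p}$ into $t\equiv r-1\pmod{p}$, and likewise modulo $q$) puts the conclusion into the stated form. I expect the only delicate point to be precisely this last bookkeeping --- tracking the shift between $t$ and $t+1$ and the detachment of the boundary values $r=1$ as separate items --- since the rest is a routine application of the Auslander--Reiten formula together with Corollary~\ref{suporte}.
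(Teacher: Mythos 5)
Your proposal is correct and follows essentially the same route as the paper: reduce, via the Auslander--Reiten formula, to the condition $j\in(\Supp\tau^{-(t+1)}\FF)'\cap(\Supp\tau^{-(t+1)}\GG)'$ and then evaluate this set case by case using Corollary~\ref{suporte}. Your reindexing by $n=t+1$ (four divisibility cases) is just a cleaner bookkeeping of the paper's five cases on $t$ with their residue subcases, and the translations back to congruences on $t$ all check out.
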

\begin{proof} Let $Y\cong \tau^{k}I_{j}$, for $k\geq 0$ and $j \in [0, p+q-1]$ and  $M$ be a regular $A$-module. Hence $\Hom_A(Y,M)=0$. On the other 
hand, from the  Auslander-Reiten formula,
we have $$\Ext^{1}_{A}(\tau^{k}I_j, M)\cong D\Hom_A(\tau^{-1}M,\tau^{k}I_j)\cong D\Hom_A(\tau^{-(k+1)}M,I_j).$$
It fo\-llows that
$\Ext^{1}_{A}(\tau^{k}I_j, M)=0  \text{ if and only if, } j \notin \supp\,\tau^{-(k+1)}M.$
Therefore
\[(F,G,\tau^{k}I_j) \text{ is a $s.s.$ } \text{ if and only if, } j \in (\Supp \tau^{-(k+1)}\FF)'\cap (\Supp \tau^{-(k+1)}\GG)'.\]
We consider several cases in order to find all the $s.s.$  of the  form  $(F,G,Y)$ with $Y$ preinjective.

\noindent {\bf Case 1.} Let $Y \cong I_j$ with  $j\in [0, p+q-1]$.
Then,  by Corollary \ref{suporte}, we  have that $(\Supp \tau^{-1}\FF)' \cap (\Supp \tau^{-1}\GG )' = \{1\}\cap\{p\}= \emptyset.$

\noindent {\bf Case 2.} Let $Y \cong \tau^{t}I_j$, with $t\geq 1$ such that  $p\mid t$ and $q\mid t$. Then
$$(\Supp\tau^{-(t+1)}\FF)' \cap  (\Supp\tau^{-(t+1)}\GG)'=(\Supp \tau^{-1}\FF)'\cap (\Supp \tau^{-1}\GG)' =\emptyset.$$

\noindent {\bf Case 3.} Let $Y \cong \tau^{t}I_j$, with $t$ such that $p \nmid t$ and  $q\mid t$. Let
$t \equiv r\,\,(\mod\, p)$, $ r \in [1, p-1]$. Here we need to consider two cases: 
\begin{itemize}
 \item If $r=p-1$ then, using  Corollary \ref{suporte}, we have that
$$ \Supp \tau^{-(t+1)}\FF =\Supp \tau^{-(r+1)}\FF = \Supp \tau^{-p}\FF = \Supp \FF  \text{ and }$$ 
$$\Supp \tau^{-(t+1)}\GG = \Supp \tau^{-1}\GG=[0,  p+q-1]\setminus\{p\}.$$
Hence $(\Supp \tau^{-(t+1)}\FF)' \cap (\Supp \tau^{-(t+1)}\GG)' =\{p\}$ and 
therefore  \\ $(F,G,\tau^t I_{p})$ is a $s.s.$

 \item If $r \not =p-1$, 
$\Supp\tau^{-(t+1)}\FF =\Supp\tau^{-(r+1)}\FF =[0,p+q-1]\setminus\{r+1\}.$
Then $(\Supp\tau^{-(t+1)}\FF)'\cap(\Supp\tau^{-(t+1)}\GG)' =\{r+1\}\cap \{p\}=\emptyset .$
\end{itemize}

\noindent {\bf Case 4.} Let $Y \cong \tau^{t}I_j$ with $t\geq 1$ such that $q \nmid t$  and $p\mid t$. Let
$r\equiv t\,\,(\mod q)$, with $r\in [1,q-1]$. Consider two cases $r=q-1$ and 
 $r\not= q-1$.
 \item\begin{itemize}
 \item If $r=q-1$, then $\Supp \tau^{-(t+1)}\GG  =\Supp \tau^{-q}\GG= \Supp \GG \text{ and }$
$\Supp \tau^{-(t+1)}\FF = \Supp \tau^{-1}\FF =[ 0, p+q-1]\setminus \{1\}.$
It follows that $(\Supp  \tau^{-(t+1)}\FF)' \cap (\Supp \tau^{-(t+1)}\GG)' =\{1\}.$ Therefore $(F,G,\tau^t I_{1})$
is a $s.s.$
\item If $r \not =q-1$, so we have that
$$\Supp \tau^{-(t+1)}\GG = \Supp \tau^{-(r+1)}\GG  = [0, p+q-1] \setminus \{p+r\}. $$
Then $(\Supp \tau^{-(t+1)}\FF)'\cap (\Supp \tau^{-(t+1)}\GG)' = \{1\}\cap \{p+r\}=\emptyset$.
\end{itemize}

\noindent{\bf Case 5.} Let $Y \cong \tau^{t}I_j$, with $t$ such that $q \nmid t$ and $p \nmid t$.
Let $r_1$ and  $r_2$  be such that $t \equiv r_1 \,\,(\mod\,p)$, $t \equiv r_2 \,\,(\mod\,q)$,
$ r_1 \in [1,p-1]$ and $r_2 \in [1,q-1]$. We  will consider several cases:
\begin{itemize}
\item If $r_1=p-1$ and $r_2 = q-1$, then we have:
$$(\Supp \tau^{-(t+1)}\FF)'\cap (\Supp \tau^{-(t+1)}\GG)' =(\Supp \FF)' \cap (\Supp \GG)' =\{ 0, p+q-1\}.$$
Therefore $(F, G, \tau^{t}I_0)$ and $(F, G,\tau^{t}I_{p+q-1})$ are $s.s.$
\item If $r_2=q-1$ and  $r_1 \not = p-1$. Then
$\Supp \tau^{-(t+1)}\GG = \Supp \GG  \text{ and  }$                
$$\Supp \tau^{-(t+1)}\FF = \Supp \tau^{-(r_{1}+1)}\FF =[0, p+q-1] \setminus \{r_{1}+1\}. $$
Hence $(F, G,\tau^t I_{r_{1}+1})$ is a $s.s.$
\item If $r_1=p-1$ and $r_2 \not = q-1$, then 
$\Supp \tau^{-(t+1)}\FF =\Supp \FF \text{ and  }$
$$\Supp \tau^{-(t+1)}\GG = \Supp \tau^{-(r_{2} +1)}\GG  =[0,p+q-1] \setminus \{p+r_{2}\}.$$
Then $(F, G,\tau^t I_{p+r_{2}})$ is a $s.s.$
\item If $r_1 \not=p-1$ and $r_2 \not= q-1$, then
$$\Supp \tau^{-(t+1)}\GG = \Supp \tau^{-(r_{2} +1)}\GG =[0, p+q-1] \setminus \{p+r_{2}\} \text{ and }$$
$$\Supp \tau^{-(t+1)}\FF = \Supp \tau^{-(r_{1} +1)}\FF =[0, p+q-1]\setminus \{r_{1}+1\}. $$
Consequently, there is no  $s.s.$ in this case. \end{itemize}
\end{proof}




The following result is well-known and it is not difficult to prove.

\begin{lem}\label{moe} Given a finite dimensional  $K$-algebra $B$ and
\begin{displaymath}
\xymatrix{
0 \ar[r] & L \ar[r]^{(f' \,\, g')^{t}} & X \oplus Y \ar[r]^{(f\,\,g)}&  M \ar[r]& 0 
}
\end{displaymath}
an almost split sequence in $\mod B$ . Then
\begin{enumerate}
\item If $f'$ is a monomorphism (resp. epimorphism), then $g$ is a monomorphism (resp. epimorphism).
\item If $g'$ is a monomorphism (resp. epimorphism), then $f$ is a monomorphism (resp. epimorphism).
\end{enumerate}
\end{lem}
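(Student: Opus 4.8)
The statement to prove is Lemma \ref{moe}: given an almost split sequence
\[
0 \to L \xrightarrow{(f'\ g')^t} X \oplus Y \xrightarrow{(f\ g)} M \to 0
\]
in $\mod B$, if $f'$ is a monomorphism (resp. epimorphism) then $g$ is a monomorphism (resp. epimorphism), and similarly swapping the roles of $f',g'$ and $g,f$.

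\textbf{The plan.} The proof is a direct diagram chase, exploiting two structural facts about almost split sequences: first, that the component maps $f', g'$ of the left-hand map are \emph{irreducible} (in particular neither a split mono nor a split epi, unless the corresponding summand is zero), and second, that the total map $(f'\ g')^t$ is itself a monomorphism while $(f\ g)$ is an epimorphism with kernel exactly the image of $(f'\ g')^t$. So I would handle the two clauses of item (1) separately.

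\textbf{The epimorphism case.} Suppose $f' \colon L \to X$ is an epimorphism. I want $g \colon Y \to M$ to be an epimorphism. Take $m \in M$. Since $(f\ g)$ is surjective, write $m = f(x) + g(y)$ for some $x \in X$, $y \in Y$. Since $f'$ is onto, pick $\ell \in L$ with $f'(\ell) = x$. Then the element $(f'(\ell)\ g'(\ell))^t = (x\ g'(\ell))^t$ lies in the kernel of $(f\ g)$, i.e. $f(x) + g(g'(\ell)) = 0$, so $f(x) = -g(g'(\ell))$. Substituting, $m = g(y) - g(g'(\ell)) = g\big(y - g'(\ell)\big)$, which exhibits $m$ in the image of $g$. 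Hence $g$ is onto.

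\textbf{The monomorphism case.} Suppose $f' \colon L \to X$ is a monomorphism. I want $g \colon Y \to M$ to be a monomorphism. Let $y \in \ker g$. I must produce $x \in X$ with $(x\ y)^t \in \Im (f'\ g')^t$, forcing $y = 0$ once $f'$ is injective, or argue more directly. Here is the clean way: consider the submodule $\{0\} \oplus \ker g \subseteq X \oplus Y$; I claim it maps isomorphically onto its image under $(f\ g)$, but that image is contained in $\Im f$. Actually the sharper route is: for $y \in \ker g$, we have $(f\ g)(0,y) = g(y) = 0$, so $(0,y) \in \ker(f\ g) = \Im(f'\ g')^t$; thus $(0,y) = (f'(\ell), g'(\ell))$ for some $\ell \in L$, giving $f'(\ell) = 0$ and $g'(\ell) = y$. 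Since $f'$ is a monomorphism, $\ell = 0$, hence $y = g'(0) = 0$. So $g$ is injective. The same two arguments, with $(0,y)$ replaced by $(x,0)$ and the roles of the components interchanged, prove item (2).

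\textbf{Main obstacle.} There is essentially no obstacle here — this is a pure diagram chase and the result is genuinely ``well-known and not difficult to prove,'' as the text says. The only point requiring a touch of care is the bookkeeping of signs and the precise meaning of the column/row notation $(f'\ g')^t$ and $(f\ g)$, i.e. being consistent that $(f'\ g')^t \colon L \to X \oplus Y$ sends $\ell \mapsto (f'(\ell), g'(\ell))$ and that exactness at $X \oplus Y$ says $\ker(f\ g) = \Im(f'\ g')^t$; once that is fixed, each of the four implications is two or three lines. One might alternatively phrase the monomorphism statements via the snake lemma or a short exact sequence of the component maps, but the element-wise chase above is shortest and I would present it that way.
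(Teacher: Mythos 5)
Your proof is correct. The paper itself gives no argument for Lemma \ref{moe}---it is stated as ``well-known and not difficult to prove''---so there is nothing to diverge from; your element-wise chase, using only that $(f'\ g')^t$ is injective, $(f\ g)$ is surjective, and $\ker(f\ g)=\Im(f'\ g')^t$, is a complete and valid justification of both the monomorphism and epimorphism clauses (and of item (2) by symmetry). One small remark: the opening paragraph's appeal to irreducibility of $f'$ and $g'$ is never used and is not needed --- as your chase shows, the statement holds for any short exact sequence of this shape, not just almost split ones.
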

\begin{prop}\label{fcpp} Let $A=K(\widetilde{\mathbb{A}}_{p,q})$. The postprojective component $\PP(A)$ of $\Gamma(\mod A)$ has 
the following properties:
\begin{enumerate}
\item  Any  irreducible morphism $W \lfd V$ between indecomposable modules in  $\PP(A)$ is a monomorphism.
\item The module $P_{p+q-1}$ and  its successors in $\PP(A)$ are sincere.
\item If $0<r<p$ then there is a projective $P_i$, for some $i\in [0, p+q+1]$ such that $\tau^{-r}P_i$ is not sincere, moreover $\tau^{-t}P_i$
is sincere for all $t\geq p$ and $i\in [0, p+q+1]$.
\item If $i \in[0,p-1]$, then
\begin{itemize}
      \item The smallest integer $r$ such that $\tau^{-r}P_i$ is sincere is $r=p-i$. Furthermore, all
       the modules of the form $\tau^{-k}P_i$, with  $k>p-i$, are sincere.
      \item If $ k \in [0,p-i]$, then the composition factors of  $\tau^{-k}P_i$ are
      $S_j$ with $ j \in [0,i+k] \cup [p,p+k-1]$.
      \item The composition factors  of $P_i$ are $S_j$ with  $j \in [0,i]$.
      \end{itemize}
\item If $ i \in [p,p+q-2]$ and $p+q > 2$ then
      \begin{itemize}
      \item If $i\leq q-1$, then the smallest integer $r$ such that $\tau^{-r}P_i$ is sincere is
       $r=p.$ Furthermore, all the modules $\tau^{-k}P_{i}$, with $k>p$, are sincere.
      \item If $i \in [q-1, p+q-1]$, then the smallest integer $r$ such that $\tau^{-r}P_i$ is sincere 
      is $r=p+q-1-i.$ Furthermore, all the modules $\tau^{-k}P_{i}$, with $k>p+q-1-i$, are sincere.
      \item If $\tau^{-k}P_{i}$ is not sincere, then its composition factors are $S_j$ with $ j \in [p, i+k] \cup [0, k]$.
      \end{itemize} 
     \item If $ k \in [1,p-1]$, then the composition factors of $\tau^{-k}P_{0}$ are $S_j$ with  $j \in [0,k]\cup[p, p-1+k]$.

\end{enumerate}

\end{prop}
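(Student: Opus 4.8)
The plan is to exploit the very explicit combinatorial structure of the postprojective component of $A = K(\widetilde{\mathbb{A}}_{p,q})$. The underlying principle throughout is that the postprojective component is built from the projectives $P_0, P_1, \ldots, P_{p+q-1}$ by repeatedly applying $\tau^{-1}$, and that each $\tau^{-1}$-orbit mesh is an almost split sequence whose middle term is the direct sum of the images of the $\le 2$ arrows in the quiver at the relevant vertex. First I would establish (1): since $A$ is hereditary and we are in the postprojective component, every indecomposable there is of the form $\tau^{-k}P_i$; an irreducible morphism between two such modules is either injective or surjective by \cite[Chapter VIII, Lemma 3.3]{ASS1} (the same result quoted in the proof of Lemma \ref{excseq}), and it cannot be surjective because then its source would be a predecessor of a module with strictly smaller dimension vector along the orbit, contradicting that dimension vectors are nondecreasing along $\tau^{-1}$-orbits in the postprojective component (alternatively: the arrows of $\widetilde{\mathbb A}_{p,q}$ pointing \emph{into} vertices force the coordinate-wise inequality $\ldim W \le \ldim V$, strictly in at least one coordinate). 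So all irreducible maps in $\PP(A)$ are monomorphisms; this is the key local fact. Combined with Lemma \ref{moe}, it will let me propagate sincerity: once a module $V$ in $\PP(A)$ is sincere, every successor of $V$ contains $V$ as a submodule (follow a chain of irreducible monomorphisms) and is therefore sincere too.

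Next I would compute the dimension vectors directly. For the projectives themselves the composition factors of $P_i$ are the $S_j$ with $j$ a vertex reachable from $i$ by a path; reading off the canonical orientation of $\widetilde{\mathbb A}_{p,q}$ gives exactly $\supp P_i = [0,i]$ for $i\in[0,p-1]$, and the analogous statement on the other branch for $i \in [p, p+q-2]$ — this settles the last bulleted item of (4), the relevant part of (5), and the $k$-independent cases. For the modules $\tau^{-k}P_i$ I would argue by induction on $k$ using the mesh (almost split) sequences: $\ldim \tau^{-k}P_i$ is obtained from $\ldim \tau^{-(k-1)}P_i$ by adding the dimension vectors of its immediate successors in the previous $\tau^{-1}$-column and subtracting, with the precise bookkeeping dictated by which vertices of the quiver have one versus two outgoing/incoming arrows. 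Tracking this around the two branches of length $p$ and $q$ yields the stated supports: $[0, i+k]\cup[p, p+k-1]$ for $\tau^{-k}P_i$ with $i\in[0,p-1]$ and $k\in[0,p-i]$, and $[p, i+k]\cup[0,k]$ in the non-sincere range on the other branch; and in particular for $i=0$ one gets $[0,k]\cup[p,p-1+k]$, which is item (6). The claim "smallest $r$ with $\tau^{-r}P_i$ sincere is $p-i$" (resp. $p$, resp. $p+q-1-i$) then follows by simply checking when the two intervals above first cover all of $[0,p+q-1]$: the last vertex to be reached from $i$ going around the cycle is at combinatorial distance $p-i$ (resp. $p$, resp. $p+q-1-i$), and "all $\tau^{-k}P_i$ with larger $k$ are sincere" is exactly the propagation-of-sincerity argument from the previous paragraph. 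Item (2) is the special case $i = p+q-1$ of this, since $P_{p+q-1}$ is already sincere (the sink vertex $p+q-1$ receives paths from every vertex). Item (3) is likewise immediate: pick any $i\in[0,p-1]$ with $i>0$, e.g.\ $i = p-r$, so that $\tau^{-r}P_{p-r}$ is the \emph{first} sincere module on that orbit and $\tau^{-(r-1)}P_{p-r}$ is not sincere; and the uniform bound "$\tau^{-t}P_i$ sincere for all $t\ge p$ and all $i$" holds because $p-i\le p$, $p\le p$, and $p+q-1-i \le p$ for $i\ge q-1$ cover every case, while $i=0$ gives threshold exactly $p$.

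I expect the main obstacle to be the careful combinatorial bookkeeping of the dimension vectors $\ldim \tau^{-k}P_i$ as the induction crosses the two branch vertices (the source vertex $0$ and the vertex $p+q-1$ where the two arms meet), since that is where the number of irreducible maps changes and the mesh relation changes shape; one must be attentive that the two intervals describing the support remain intervals and meet correctly, and that the "off by one" in the thresholds ($p-i$ versus $p-i+1$, etc.) is pinned down exactly rather than just up to a constant. Everything else — the monotonicity of irreducible morphisms, the propagation of sincerity via Lemma \ref{moe}, and reading composition factors of projectives off the quiver — is routine once (1) is in hand, so I would spend the bulk of the writeup on a clean inductive statement that packages all the $\tau^{-k}P_i$ dimension vectors at once and from which items (2)–(6) drop out as corollaries.
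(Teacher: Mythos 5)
Your outline matches the paper's strategy almost step for step: establish that every irreducible morphism in $\PP(A)$ is a monomorphism, propagate sincerity to successors along chains of monomorphisms, read the composition factors of the projectives off the quiver, and then determine the supports of the $\tau^{-k}P_i$ from the explicit shape of the component, with the thresholds $p-i$, $p$, $p+q-1-i$ falling out of the two distinguished paths through $P_{p+q-1}$ and $\tau^{-p}P_0$. The one real divergence is in how item (1) is justified. The paper starts from the fact that the irreducible morphisms \emph{between projectives} are monomorphisms and then uses Lemma \ref{moe} to push monicity across each mesh, sweeping left to right through $\PP(A)$; this is self-contained. Your primary argument instead invokes the mono-or-epi dichotomy of \cite[Ch.~VIII, Lemma 3.3]{ASS1} and rules out epimorphisms by appealing to the nondecrease of dimension vectors along the component --- but that monotonicity is not independently established in your write-up, and via mesh additivity it is essentially equivalent to the statement that the irreducible maps are mono, so as stated this step is close to circular. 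Since you already have Lemma \ref{moe} and the mesh structure in hand, the repair is immediate (and is exactly what the paper does), so I would regard this as a presentational weak point rather than a fatal gap. Your mesh-additivity induction for the supports of $\tau^{-k}P_i$ is a mild variant of the paper's path-counting argument (the paper observes that the composition factors of $\tau^{-k}P_i$ are, up to multiplicity, those of the projectives $P_{i+k}$ and $P_{p+k-1}$ admitting paths to it); both require the same careful bookkeeping at the branch vertices $0$ and $p+q-1$, which you correctly identify as the delicate point.
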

\begin{proof} Our proof uses the structure of the postprojective component $\PP(A)$ of $\Gamma(\mod A)$ which looks as follows:
\begin{displaymath}
\xymatrix @!0 @R=2em @C=3pc{
& & & & P_{p+q-1} \ar[dr]&----&---- &----&----&--\\
& & & P_{p-1}\ar[dr]\ar[ur] & & \tau^{-1}P_{p-1}\ar[dr]\ar[ur] & & &&\\
& & \iddots \ar[ur] & & \tau^{-1}P_{p-2}\ar[ur]\ar[dr] & & \ddots \ar[dr] & &&\\
& P_{1} \ar[ur]\ar[dr] & & \iddots  \ar[ur] && \ddots \ar[dr] & & \tau^{-(p-1)}P_{1}\ar[dr] \ar[ur] &&\\
P_0 \ar[ur]\ar[dr] & &\tau^{-1}P_{0}\ar[ur] \ar[dr]& & & & \tau^{-(p-1)}P_{o}\ar[dr]\ar[ur] & & \tau^{-p}P_{o} \ar[ur] \ar[dr]&\\
& P_{p}\ar[ur]\ar[dr] & & \ddots \ar[dr] & & \iddots \ar[ur]&& \tau^{-(p-1)}P_p \ar[ur]\ar[dr]&&\\
& &\ddots\ar[dr] & & \tau^{-1}P_{p+q-3}\ar[ur] \ar[dr] & & \iddots \ar[ur] &&&\\
& & &P_{p+q-2}\ar[ur]\ar[dr]& & \tau^{-1}P_{p+q-2}\ar[ur]\ar[dr]&&&&\\
& & & &P_{p+q-1}\ar[ur]&----&----&----&----&--\\
}
\end{displaymath}

In order to prove (1) we observe that in the almost split sequence
$$ 0 \lfd P_0 \overset{(\alpha'\,\,\,\beta')^{t}} \lfd P_1\oplus P_p \overset{(\alpha\,\,\,\beta)}\lfd \tau^{-1}P_0 \lfd 0,$$
the morphisms $\alpha'$ and $\beta'$ are mono, because are irreducible morphisms between projective modules. Then, by  Lemma \ref{moe}, 
$\alpha$ and  $\beta$ are mono too. Analogously, in all almost split sequences beginning in a projective module the arrows represent 
monomorphisms. Hence, according to the shape $\PP(A)$, all the arrows in this component represent monomorphisms.

Now we prove (2). We note that in $\PP(A)$ there are paths 
$$P_0\fd P_1\fd\cdots\fd P_{p-1}\fd P_{p+q-1} \text{ and  }P_0 \lfd P_p \fd \cdots\fd P_{p+q-1} \fd P_{p+q-1},$$
where, by (1), all the arrows represent monomorphisms. Therefore, $P_{p+q-1}$ and  its successors are  sincere modules. 

For (3), we observe that none of the predecessors  in  $\PP(A)$ of modules in the paths  below 
\begin{equation}\label{caminho1}
P_{p+q-1}\lfd \tau^{-1}P_{p-1} \lfd \tau^{-2}P_{p-2}\lfd \cdots \lfd \tau^{-(p-1)}P_{0} \lfd  \tau^{-p}P_{0} \end{equation}
\begin{equation}\label{caminho2}
P_{p+q-1}\lfd \tau^{-1}P_{p+q-2} \lfd \tau^{-2}P_{p+q-3}\lfd \cdots \lfd \tau^{-(p-1)}P_{p} \lfd \tau^{-p}P_{0} \end{equation}
have  $S_{p+q-1}$ as a composition factor. In particular, $\tau^{-(p-1)}P_{0}$ does not have $S_{p+q-1}$ as a composition factor. 
Therefore, by (1), none of its predecessors have $S_{p+q-1}$ as a composition factor. On the other hand, any successor of 
$\tau^{-(p-1)}P_{0}$, and any of modules in (\ref{caminho1}) and (\ref{caminho2}), are sincere.

In order to show (4), we note that all the modules  $\tau^{-(p-i)}P_{i}$, with $ i \in [0,p-1]$,
are in the path (\ref{caminho1}) and therefore are sincere. Moreover, as stated earlier, no predecessor of a module which is in the path
(\ref{caminho1}) has  $S_{p+q-1}$ as a composition factor. It follows that, if 
$ i \in [0,p-1]$ then none of the modules of the form $\tau^{-k}P_{i}$, with $k<p-i$, are sincere. 

On the other hand, the predecessors of  $P_i$, with  $ i \in [0,p-1]$, are the projectives
$P_0, P_1,\ldots,P_{i-1}$. Thus the composition factors of $P_i$,  with $ i \in [0,p-1]$,
are $S_j$ with $ j \in [0,i]$. Analogously, for $ i \in [p,i <p+q-2]$, the composition factors of
$P_i$ are $S_j$, with $ j \in [p,i] \cup \{0\}$. 

Let $i \in[0, p-1] $. Then in  $\PP(A)$ there are paths of the form
\begin{equation}
P_{i+k}\fd \tau^{-1}P_{i+k-1} \fd \tau^{-2}P_{i+k-2}\fd \cdots \fd \tau^{-(k-1)}P_{i+1} \fd  \tau^{-k}P_{i} \end{equation} 
and 
\begin{equation}
\begin{array}{l}
 P_{p+k-1}\fd \tau^{-1}P_{p+k-2} \fd \cdots \fd \tau^{-(k-1)}P_{p} \fd \tau^{-k}P_{0} \fd 
  \tau^{-k}P_{1}\fd            \cdots \fd \tau^{-k}P_{i}
\end{array}
\end{equation}

Furthermore,  there are no paths between $P_{j}$, with $j \geq i+k$, and  $\tau^{-k}P_{i}$. Hence the composition factors of 
$\tau^{-k}P_i$, up to multiplicity, are the composition factors of $P_{i+k}$ and of $P_{p+k-1}.$ In other words, the composition 
factors of  $\tau^{-k}P_{i}$ are  $S_j$ with $j \in [0,i]$ and $ i \in [p,p+q-2]$.

The proof of (5), is analogous to the previous  and part (6) is left to the reader, since it follows by a similar argument.
\end{proof}

\begin{prop}\label{completo1} Let $A=K(\widetilde{\mathbb{A}}_{p,q})$. The  complete list of $s.s.$
$(M,F,G,Y)$, where $Y$ is a postprojective $A$-module, is as follows:
\begin{enumerate}
\item $(S_{p+q-1},F,G, P_0)$.
\item $(M,F,G, P_{p+q-1})$, where
      \begin{displaymath}
      \begin {array}{rll}
      M \cong &\left \{ \begin {array}{ll}
      \tau^{-p+1}P_{0}, \textrm{ if } p=q\\
      \tau^{-p+1}P_{q-1}, \textrm{ if } p \not =q\\
      \end{array} \right.\\
      \end{array}
      \end{displaymath}
\item $(M,F,G,\tau^{-t} P_{p+q-1})$ with $t\in \mathbb{N}$ such that $p\mid t$ and $q\mid t$, where
\begin{displaymath}
      \begin {array}{rll}
      M \cong &\left \{ \begin {array}{ll}
      \tau^{-t-p+1}P_{0}, \textrm{ if } p=q\\
      \tau^{-t-p+1}P_{q-1}, \textrm{ if } p \not =q\\
      \end{array} \right.\\
      \end{array}
\end{displaymath}
\item $(\tau^{-t+1}P_{p+q-1},F,G,\tau^{-t}P_0)$, with $t\in\mathbb{N}$
      such that $p\mid t$ and $q \mid t$.
\item $(\tau^{-t-(p-r-1)}P_{q+r-1},F,G,\tau^{-t} P_{p-r})$, with $t,r\in \mathbb{N}$ such that $q\mid t$, $t\equiv r \,\,(\mod \,p)$ 
      and $ r \in [1,p-1]$. 
\item $(M, F,G,\tau^{-t}P_{p+q-r-1})$, with $t,r\in\mathbb{N}$ such that $p\mid t$, $t\equiv r \,\,(\mod \,q)$
      and $r \in [1,q-1]$, where
      \begin{displaymath}
      \begin {array}{rll}
      M \cong &\left \{ \begin {array}{ll}
      \tau^{-t-q+r+1}P_{p-q+r},  \textrm{ if } p \geq q-r\\
       \tau^{-t-p+1}P_{q-r-1}, \textrm{ if } p<q-r\\
      \end{array} \right.\\
      \end{array}
\end{displaymath}
\end{enumerate}
\end{prop}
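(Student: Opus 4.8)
The plan is to combine the two preceding propositions. Proposition~\ref{posprojetivo1} already classifies exactly which postprojective modules $Y$ can occur as the last term of a stratifying system $(F,G,Y)$; there are six families, matching the six cases below. For each such $Y$, Lemma~\ref{estender} guarantees that the exceptional sequence $(F,G,Y)$ can be completed to a complete exceptional sequence of length $p+q$, and since $|F|+|G|+1 = (p-1)+(q-1)+1 = p+q-1$, exactly one module $M$ must be inserted; by Lemma~\ref{unico} this $M$ is uniquely determined. Moreover, the insertion must happen \emph{before} $F$: an easy check shows $M$ cannot be placed after $Y$ (it would then be a postprojective-or-later successor, and the $\Hom$/$\Ext$ conditions against $Y$ fail), nor between $F$ and $G$ or inside $F$ or $G$ (the tubes $\mathcal T_\infty,\mathcal T_0$ are orthogonal and $\FF$, $\GG$ are already maximal exceptional sequences inside their tubes). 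So in each case the problem reduces to \emph{identifying} the unique $M$, and then verifying the four conditions: $\Hom_A(M,F)=\Hom_A(M,G)=\Hom_A(M,Y)=0$, $\Hom_A(Y,M)=0$ (automatic, $M$ postprojective and there are no backward paths), $\Ext^1_A(M,M)=0$ (automatic, $M$ exceptional), and $\Ext^1_A(M,F)=\Ext^1_A(M,G)=\Ext^1_A(M,Y)=0$ (the first two are automatic since $M$ is postprojective and $F,G$ regular, as in the proof of Proposition~\ref{posprojetivo1}).

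The concrete identification of $M$ is where the computation lives, and it runs case by case using Proposition~\ref{fcpp} and the support computations of Corollary~\ref{suporte}. For Case~(1), $Y=P_0$ is simple projective and $M=S_{p+q-1}=I_{p+q-1}$ is forced since it is the unique simple not supported in $\FF\cup\GG$ and $\Hom_A(I_{p+q-1},P_0)=0$. For the remaining cases $Y=\tau^{-t}P_j$ is sincere (by Proposition~\ref{fcpp}(2)--(6), once $t$ is large enough, which is exactly the divisibility hypothesis forcing $j\in\{0,p+q-1\}$ or the appropriate vertex), so $M$ cannot be sincere, and the condition $\Hom_A(M,\tau^{-t}P_j)=0$ together with $\Hom_A(M,F)=\Hom_A(M,G)=0$ pins down $\supp M$ via the support formulas of Proposition~\ref{fcpp}. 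The candidate $M=\tau^{-s}P_k$ is then the postprojective module sitting on the path in $\PP(A)$ feeding into $\tau^{-t}P_j$ whose composition factors are the prescribed non-sincere set, and one reads off $s$ and $k$ from the mesh picture of $\PP(A)$; the split into $p=q$ versus $p\neq q$ in Cases~(2),(3) and $p\geq q-r$ versus $p<q-r$ in Case~(6) is precisely the bookkeeping of whether $\tau^{-(p-1)}P_0$ or $\tau^{-(p-1)}P_{q-1}$ (resp.\ the analogous vertex in the other wing) is the relevant non-sincere predecessor.

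The vanishing $\Hom_A(M,F)=\Hom_A(M,G)=0$ is checked via the Auslander--Reiten formula exactly as in Proposition~\ref{posprojetivo1}: writing $M=\tau^{-s}P_k$, one has $\Hom_A(\tau^{-s}P_k,\FF)\cong\Hom_A(P_k,\tau^s\FF)$, which vanishes iff $k\notin\Supp\tau^s\FF$, and likewise for $\GG$; the chosen exponents in each case are arranged (using the stated congruences $q\mid t$, $t\equiv r\,(\bmod\,p)$, etc.) so that $k$ falls in the complement. The vanishing $\Hom_A(M,Y)=\Hom_A(\tau^{-s}P_k,\tau^{-t}P_j)=0$ is a statement about the postprojective component: there is no path from the $\tau^{-s}P_k$-position to the $\tau^{-t}P_j$-position in $\PP(A)$, which one verifies from the mesh picture (the module $M$ is chosen as a \emph{sibling branch} that has already "passed" the position of $Y$, or lies strictly outside its predecessor cone). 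Uniqueness then need not be re-proved from scratch: Lemma~\ref{unico} delivers it once existence of one valid $M$ is established.

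The main obstacle, and the only genuinely laborious part, is the last case: disentangling, in Case~(6), the two sub-cases $p\geq q-r$ and $p<q-r$ and confirming that in each the displayed module $M$ (namely $\tau^{-t-q+r+1}P_{p-q+r}$ or $\tau^{-t-p+1}P_{q-r-1}$) genuinely has the non-sincere support forced by the $\Hom$-vanishing conditions and sits outside the predecessor cone of $\tau^{-t}P_{p+q-r-1}$. This requires a careful trace through the two "arms'' of the postprojective component in Proposition~\ref{fcpp}(4)--(6), keeping track of which vertices $S_j$ appear in $\tau^{-k}P_i$; I expect the rest of the cases to be routine once the pattern from Cases~(1)--(5) is set up, and I would present Case~(6) last with full detail and merely indicate the parallel computations for the earlier cases.
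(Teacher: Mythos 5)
Your overall skeleton coincides with the paper's: take the admissible $Y$ from Proposition~\ref{posprojetivo1}, invoke Lemmas~\ref{estender} and~\ref{unico} for existence and uniqueness of the completing module $M$, and then identify $M$ case by case using Corollary~\ref{suporte}, Proposition~\ref{fcpp} and Lemma~\ref{repete}. The gap is that the homological conditions you propose to verify point in the wrong direction. Since $M$ sits in the \emph{first} position of $(M,F,G,Y)$, Definition~\ref{ss} requires $\Hom_A(\FF,M)=\Hom_A(\GG,M)=\Hom_A(Y,M)=0$ together with $\Ext^1_A(M,M)=\Ext^1_A(\FF,M)=\Ext^1_A(\GG,M)=\Ext^1_A(Y,M)=0$; there is no requirement whatsoever on $\Hom_A(M,\FF)$, $\Hom_A(M,\GG)$, $\Hom_A(M,Y)$ or the corresponding $\Ext^1$ groups, which is where you put most of your effort. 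This is not a cosmetic relabelling, for two reasons.

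First, the condition you dismiss as automatic, $\Hom_A(Y,M)=0$, is the substantive one: in most cases $M$ is a proper successor of $Y=\tau^{-t}P_l$ in $\PP(A)$, where by Proposition~\ref{fcpp}(1) every irreducible map is a monomorphism, so forward paths from $Y$ to $M$ abound; the vanishing holds only because $[\tau^{t}M:S_l]=0$, which is precisely what the composition-factor statements of Proposition~\ref{fcpp} are needed for. (Likewise $\Ext^1_A(Y,M)=0$ is not free; it needs Lemma~\ref{repete}, or a direct Auslander--Reiten computation in case (4), where $M=\tau^{-t+1}P_{p+q-1}$ is one $\tau$-step on the wrong side for that lemma.) Second, the support test you use to pin down $M=\tau^{-s}P_k$, namely $k\in(\Supp\tau^{s}\FF)'\cap(\Supp\tau^{s}\GG)'$ coming from $\Hom_A(M,\FF)=\Hom_A(M,\GG)=0$, is shifted by one power of $\tau$ from the correct test $k\in(\Supp\tau^{s+1}\FF)'\cap(\Supp\tau^{s+1}\GG)'$, which comes from $\Ext^1_A(\FF,M)\cong D\Hom_A(P_k,\tau^{s+1}\FF)$. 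The shift is fatal: in case (2) with $1<p<q$ the correct module $M=\tau^{-p+1}P_{q-1}$ satisfies $q-1\in(\Supp\tau^{p}\FF)'\cap(\Supp\tau^{p}\GG)'=\{q-1\}$, whereas your test gives $(\Supp\tau^{p-1}\FF)'\cap(\Supp\tau^{p-1}\GG)'=\{1\}\cap\{q\}=\emptyset$, so your criterion would find no candidate at all; indeed one checks $\Hom_A(M,F_1)\cong\Hom_A(P_{q-1},E_p^{(\infty)})\neq0$, which is harmless for the stratifying system but violates the condition you impose. Reverse the directions and the rest of your plan goes through essentially as in the paper.
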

\begin{proof}  Proposition \ref{posprojetivo1} characterizes the $s.s.$ of the form $(F, G, Y)$,  with $Y$
a  postprojective $A$-module. Using this characterization we obtain stratifying systems of type $(X,F,G,Y)$.
 
The quiver $\widetilde{\mathbb {A}}_{p,q}$ has $p+q$ vertices then, by Lemma \ref{estender} and  Lemma \ref{unico}, there is a unique 
 module $M$ such that  $(M,F,G,\tau^{-t}P_l)$ is a $c.s.s.$  Thus, the proof consists in the verification
that the indecomposable module $ M $ satisfies the following conditions:
\begin{itemize}
\item $\Ext_{A}^{1}(M,M)=0$.
\item  $\Hom_{A}(\FF,M)=0$ and $\Hom_{A}(\GG,M)=0$.
\item  $\Ext_{A}^{1}(\FF,M)=0$ and $\Ext_{A}^{1}(\GG,M)=0$.
\item  $\Hom_{A}(\tau^{-t}P_l,M)=0$ and $\Ext_{A}^{1}(\tau^{-t}P_l,M)=0$.
\end{itemize}
We observe that all the modules $M$ considered in this proof, except $I_ {p+q-1}$,
are of the form $M\cong\tau^{-k}P_j$, with $k \geq 0$, and therefore are indecomposable and have no self-extensions.
Moreover, there is no nonzero morphism from a regular module to a posprojective module, that is, if $R$ is a regular 
module then $\Hom_A(R,\tau^{-k}P_j)=0$. On the other hand, by the Auslander-Reiten formula, we have that 
$$\Ext^{1}_A(R, \tau^{-k}P_{j}) \cong  D\Hom_A(\tau^{-k}P_{j}, \tau R)\cong D\Hom_A(P_{j}, \tau^{k+1}R).$$
Therefore we have that $\Ext^{1}_A(\FF,\tau^{-k}P_{j})=0 \text{ and }\Ext^{1}_A(\GG,\tau^{-k}P_{j})=0$ if and only if, 
$ j\in (\Supp \tau^{k+1}\FF)' \cap (\Supp \tau^{k+1} \GG)'$.
In view of these observations, if $(F,G,\tau^{-t}P_l)$ is a $s.s.$ and
$X\cong\tau^{-k}P_j$, in order to show that $(X, F,G,\tau^{-t}P_l)$
is a $s.s.$ it is sufficient to check the following conditions:                                                                            
\begin{itemize}
\item $j\in (\Supp \tau^{k+1}\FF)' \cap (\Supp \tau^{k+1} \GG)'.$
\item $\Hom_{A}(\tau^{-t}P_l,M)=0$  (or equivalently $[\tau^{t}M:S_l]=0$).
\item  $\Ext_{A}^{1}(\tau^{-t}P_l,M)=0$.
\end{itemize}
In what follows we find these conditions for each of the sequences stated in the proposition.

\noindent (1) First let us check that $(S_{p+q-1},F,G, P_0)$  is a $s.s.$ As the vertex $p+q-1$ is a source then 
the $S_{p+q-1} \cong I_{p+q-1}$ and thus 
      $$\Ext_{A}^{1}(\FF,S_{p+q-1})=0,\Ext_{A}^{1}(\GG,S_{p+q-1})=0 \text{ and }\Ext_{A}^{1}(P_{0},S_{p+q-1})=0.$$
On the other hand,  since  $p+q-1\notin(\Supp\,\FF \cup\Supp\,\GG)$   then \\
    $\Hom_{A}(\FF,S_{p+q-1})=0,$ $ \Hom_{A}(\GG, S_{p+q-1})=0$  and  $\Hom_{A}(P_0, S_{p+q-1})=0.$
    
\noindent (2) To complete the $s.s.$ $(F,G, P_{p+q-1})$  we have to consider two cases:
      \begin{itemize}
      \item If $p=q$ and $M \cong \tau^{-p+1}P_0$, then  by Corollary \ref{suporte} we have that
      $(\Supp \tau^{p}\FF)' \cap (\Supp \tau^{p} \GG)'= \{0,p+q-1\}.$
      On the other hand $[\tau^{-p+1}P_{0}:S_{p+q-1}]=0$, by Proposition \ref{fcpp} (6).
       Therefore  \\$(\tau^{-p+1}P_0,F,G,P_{p+q-1})$
      is a $c.s.s.$
    
      \item If $p\not=q$ and $M\cong \tau^{-p+1}P_{q-1}$ then $$(\Supp\tau^{p}\FF)' \cap (\Supp\tau^{p}\GG)'= \{q-1\}$$
            and, by Proposition \ref{fcpp} (5), we have that $[\tau^{-p+1}P_{q-1}:S_{p+q-1}]=0$.
            \end{itemize}
 \vspace{0.3cm}

\noindent (3) In order to complete the $s.s.$ $(F,G,\tau^{-t} P_{p+q-1})$, with $t\geq 1$ such that
      $p\mid t$ and  $q\mid t$, we consider the following two cases:
\begin{itemize}
\item Suppose $p=q$. We claim that $(\tau^{-t-p+1}P_{0},F, G,\tau^{-t}P_{p+q-1})$, is a $c.s.s.$ In fact,
      $ (\Supp\tau^{t+p}\FF)' \cap (\Supp\tau^{t+p}\GG)'=\{0, p+q-1\}.$ On the other hand, by Proposition \ref{fcpp} (6), 
      $\Hom_A(\tau^{-t}P_{p+q-1},\tau^{-t-p+1}P_{0})\cong 0.$
      Moreover, by  Lemma \ref{repete}, $\Ext^{1}_A(\tau^{-t}P_{0},\tau^{-t-p+1}P_{q-1})=0.$
    
      \item Suppose $p\not =q$. In this case $(\tau^{-t-p+1}P_{q-1},F,G,\tau^{-t}P_{p+q-1})$
      is a $c.s.s.$ Indeed, $(\Supp\tau^{t+p}\FF)'\cap (\Supp\tau^{t+p}\GG)'=\{q-1\}$, by Corollary \ref{suporte}, 
      and $[\tau^{-p+1}P_{q-1}:S_{p+q-1}]=0$, by Proposition  \ref{fcpp} (5). Finally, by Lemma \ref{repete}, we have that
      $\Ext^{1}_A(\tau^{-t}P_{p+q-1},\tau^{-t-p+1}P_{q-1})=0$.
      \end{itemize} 
       \vspace{0.3cm}
      
\noindent (4) We show that $(\tau^{-t+1}P_{p+q-1},F, G, \tau^{-t}P_{0})$, with $t\geq 1$ such that
      $p \mid t$ and  $q\mid t$ is a $c.s.s.$ In fact,
      $(\Supp \tau^{t}\FF)' \cap (\Supp \tau^{t} \GG)'= \{ 0, p+q-1\}.$
       And by the Auslander-Reiten formula we have 
      $$\Hom_A(\tau^{-t}P_{0},\tau^{-t+1}P_{p+q-1})\cong D\Ext^{1}_{A}(P_{p+q-1},\tau^{-1}P_{0})\cong 0,$$
     $$ \Ext_A^{1}(\tau^{-t}P_{0},\tau^{-t+1}P_{p+q-1}) \cong D\Hom_A(P_{p+q-1},P_{0}) \cong 0. $$
     Then the conditions for the $s.s.$ are verified.
      \vspace{0.3cm}
      
\noindent (5) We prove that $(\tau^{-t-(p-r-1)}P_{q+r-1},F,G,\tau^{-t} P_{p-r})$, with $t,r\in \mathbb{N}$ such that  $q \mid t$,          
      $t\equiv r \,\,(\mod \,p)$ and $ r \in[1,p-1]$ is a $s.s.$ According to  Corollary \ref{suporte}, we have 
      $$(\Supp \tau^{t+(p-r)}\FF)' \cap (\Supp \tau^{t+(p-r)} \GG)'=(\Supp \FF)' \cap (\Supp \tau^{p-r} \GG)'
                                                                 =\{q+r-1\}.$$                     
      Applying  Lemma \ref{repete} we can  assert that 
      $$\Ext_A^{1}(\tau^{-t}P_{p-r},\tau^{-t-(p-r-1)}P_{q+r-1})=0.$$ And  finally, by Proposition \ref{fcpp} (5), we have 
    $$\Hom_A(\tau^{-t}P_{p-r},\tau^{-t-(p-r-1)}P_{q+r-1})\cong \Hom_A(P_{p-r},\tau^{-(p-r-1)}P_{q+r-1})\cong 0.$$

\noindent (6) Suppose that $t,r\in\mathbb{N}$ are such that $p\mid t$, $t\equiv r \,\,(\mod \,q)$
      and $ r \in [1, q-1]$. Let us consider two cases:
     \begin{itemize}
     \item Assume that $p \geq q-r$. Then, by Corollary \ref{suporte}, we have that
    \[ \begin{array}{rcl}
      (\Supp\tau^{t+q-r}\FF)' \cap (\Supp \tau^{t+q-r}\GG)'& =& (\Supp \tau^{q-r} \FF)'\cap(\Supp\GG)'\\
                                                             & = &\{p-q+r\}.\\
      \end{array}\]
      We note that  $q-r-1 \geq 0 $, then  by  Lemma \ref{repete}  we  have that
      $$\Ext_A^{1}(\tau^{-t}P_{p+q-r-1},\tau^{-t-q+r+1}P_{p-q+r})=0.$$
     Furthermore, by  Proposition \ref{fcpp} (4), it follows that
      \[\begin{array}{rcl}
      \Hom_A(\tau^{-t} P_{p+q-r-1},\tau^{-t-q+r+1} P_{p-q-r})&\cong&\Hom_A(P_{p+q-r-1},\tau^{-q+r+1}P_{p-q+r})
      \\ &\cong& 0.       
      \end{array}\]
   Thus $(\tau^{-t-q+r+1} P_{p-r_1}, F,G, \tau^{-t} P_{p+q-r-1})$ is a $c.s.s.$
\item Suppose that $p<q-r$. Let $l$ such that  $p+l=q-r$. Therefore, by Corollary \ref{suporte}, we have that
     \[\begin{array}{rcl}
     (\Supp \tau^{t+p}\FF)' \cap (\Supp \tau^{t+p} \GG)'&=& (\Supp \FF)' \cap (\Supp  \tau^{-l}\GG)' \\
                                                            &=&\{p+l-1\}\\
                                                            &=&\{q-r-1\}.\\\end{array}\]
     Now, by Lemma \ref{repete}, $\Ext_A^{1}(\tau^{-t}P_{p+q-r-1},\tau^{-t-p+1}P_{q-r-1})=0.$
     Finally, by  Proposition \ref{fcpp} (4),  we have that $S_{p+q-r-1}$  is not a composition factor of  $\tau^{-p+1}P_{q-r-1}.$
      \end{itemize}
\end{proof}

Next result is proved in an analogous way as Proposition \ref{fcpp}.  

\begin{prop}\label{preinjecte}Let $A=K(\widetilde{\mathbb{A}}_{p,q})$. The preinjective component $\QQ(A)$ of $\Gamma(\mod A)$ has the following 
properties:
\begin{enumerate}
\item Any irreducible  morphism $W\lfd V$ between indecomposable modules in $\QQ(A)$ is an epimorphism.
\item The $A$-module $I_{0}$  and  its predecessors in $\QQ(A)$ are  sincere.
\item If $0<r<p$ then there is an injective $I_i$, for some $i\in [0, p+q+1]$ such that $\tau^{r}I_i$ is not sincere, moreover $\tau^{t}I_i$
is sincere for all $t\geq p$ and $i\in [0, p+q+1]$.
\item If $ i \in [1,p-1]$, then:
      \begin{itemize}
      \item the smallest integer $r$ such that $\tau^{r}I_{i}$ is a sincere module is $r=i$. 
      Furthermore, all the modules of the form $\tau^{k}I_{i}$, with $k\geq i$, are  sincere. 
      \item if $k<i$, then the composition factors of $\tau^{k}I_{i}$ are  simple   $S_j$ with $  j \in [i-k,p-1] \cup [p+q-k, p+q-1]$.      
      \end{itemize}  
\item If $p\leq i< p+q-1$, then:
      \begin{itemize} 
      \item if  $p\leq i< q-1$, the smallest integer $r$ such that $\tau^{r}I_{i}$ is a sincere module is 
      $r=i-p+1.$  Moreover,  all the modules $\tau^{r}I_{i}$, with $r \geq i-p+1$,
      are sincere.           
      \item if $ i \in[q-1, p+q-1]$, the smallest integer $r$ such that $\tau^{r}I_{i}$ is a sincere $A$-module is  $r=p$. 
      Furthermore, all the modules $\tau^{r}I_{i}$, with $r\geq p$ are sincere.
      \item if $ i \in [p, p+q-2]$ and  $k$ is such that  $\tau^{k}I_{i}$ is not a sincere module, then the composition
      factors of  $\tau^{k}I_{i}$ are simple modules $S_j$ with $ j \in [i-k,p+q-1] \cup[p-k, p-1]$.
     
      \end{itemize}
\item If $k<p$, then  the composition of $\tau^{k}I_{p+q-1}$ are simple modules  $S_j$ with $ j \in[p-k,p-1] \cup [p+q-k, p+q-1]$.

\end{enumerate}
\end{prop}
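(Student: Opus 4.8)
The plan is to deduce Proposition~\ref{preinjecte} from Proposition~\ref{fcpp} by duality, exploiting the fact that the preinjective component of $\Gamma(\mod A)$ is, up to a relabelling of the vertices, the ``mirror image'' of the postprojective one.

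First I would record the relevant duality. The opposite quiver $\widetilde{\mathbb{A}}_{p,q}^{op}$ is again canonically oriented of the same type, so there is an (involutive) isomorphism of quivers $\sigma\colon\widetilde{\mathbb{A}}_{p,q}^{op}\to\widetilde{\mathbb{A}}_{p,q}$ that interchanges the source $p+q-1$ with the sink $0$ and maps each of the two branches onto itself; explicitly, $\sigma(0)=p+q-1$, $\sigma(p+q-1)=0$, $\sigma(j)=p-j$ for $j\in[1,p-1]$, and $\sigma(j)=2p+q-2-j$ for $j\in[p,p+q-2]$. Composing the standard duality $D\colon\mod A\to\mod A^{op}$ with the equivalence $\mod A^{op}\to\mod A$ induced by $\sigma$, I obtain a duality $\delta\colon\mod A\to\mod A$ with: $\delta S_i\cong S_{\sigma(i)}$, $\delta P_i\cong I_{\sigma(i)}$, $\delta I_i\cong P_{\sigma(i)}$; $\delta\tau\cong\tau^{-1}\delta$, hence $\delta(\tau^{k}I_i)\cong\tau^{-k}P_{\sigma(i)}$; $[\delta M:S_j]=[M:S_{\sigma(j)}]$, so $\delta M$ is sincere if and only if $M$ is; $\delta$ exchanges monomorphisms and epimorphisms; and $\delta$ restricts to a duality between $\PP(A)$ and $\QQ(A)$ which reverses arrows and reverses the predecessor/successor order.

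Granting this, each of (1)--(6) becomes the $\delta$-image of the corresponding statement in Proposition~\ref{fcpp}. For (1): an irreducible map $W\to V$ in $\QQ(A)$ is carried to an irreducible map $\delta V\to\delta W$ in $\PP(A)$, which is a monomorphism by Proposition~\ref{fcpp}(1), so $W\to V$ is an epimorphism. For (2): $\delta I_0\cong P_{\sigma(0)}=P_{p+q-1}$, and the predecessors of $I_0$ correspond under $\delta$ to the successors of $P_{p+q-1}$, which are sincere by Proposition~\ref{fcpp}(2); sincerity being $\delta$-invariant, (2) follows. For (3)--(6): I would apply $\delta$ to the modules $\tau^{-k}P_{\sigma(i)}$ analysed in Proposition~\ref{fcpp}, using $\delta(\tau^{k}I_i)\cong\tau^{-k}P_{\sigma(i)}$ and the $\delta$-invariance of (non\nobreakdash-)sincerity; thus the least $r$ with $\tau^{r}I_i$ sincere equals the least $r$ with $\tau^{-r}P_{\sigma(i)}$ sincere, and the composition factors of $\tau^{k}I_i$ are precisely the $\sigma$-images of those of $\tau^{-k}P_{\sigma(i)}$.

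The one step that needs genuine care, and which I expect to be the main obstacle, is the bookkeeping: verifying that $\sigma$ carries the explicit intervals of vertices occurring in parts (4)--(7) of Proposition~\ref{fcpp} onto exactly the intervals claimed in parts (4)--(6) here. This is routine but must be done case by case from the four-clause formula for $\sigma$ above, keeping track in particular of how the vertices $0$ and $p+q-1$ move in and out of the ``projective'' interval $[0,i+k]$ as opposed to the interval $[p,p+k-1]$. Alternatively, and closer to the phrasing that the result is proved in an analogous way, one can dispense with $\sigma$ altogether and simply transcribe the proof of Proposition~\ref{fcpp}: the component $\QQ(A)$ has the shape obtained from the displayed picture of $\PP(A)$ by reversing every arrow and replacing each $P_l$ by $I_l$ and each $\tau^{-k}$ by $\tau^{k}$; Lemma~\ref{moe}, applied to the almost split sequences \emph{ending} at injective modules, propagates epimorphisms and gives (1); statement (2) holds because $I_0$ and its predecessors lie on the paths dual to \eqref{caminho1} and \eqref{caminho2}; and the composition series of each $\tau^{k}I_l$ is read off the paths of $\QQ(A)$ exactly as before, yielding (3)--(6).
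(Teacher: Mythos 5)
Your plan is sound, and it is in fact more explicit than what the paper provides: the paper offers no proof of Proposition~\ref{preinjecte} beyond the remark that it ``is proved in an analogous way as Proposition~\ref{fcpp}''. Your second route --- rerunning the argument of Proposition~\ref{fcpp} inside $\QQ(A)$, using Lemma~\ref{moe} on the almost split sequences ending at the injectives to propagate epimorphisms, and reading the composition factors off the paths of the preinjective component --- is precisely the paper's intended (unwritten) proof. Your first route, via the duality $\delta=\sigma_*\circ D$ with $\sigma\colon\widetilde{\mathbb{A}}_{p,q}^{op}\to\widetilde{\mathbb{A}}_{p,q}$, is genuinely different: it derives the proposition from Proposition~\ref{fcpp} instead of reproving it. Your formula for $\sigma$ is correct (it exchanges the sink $0$ with the source $p+q-1$ and preserves each branch), and the properties you list for $\delta$ ($\delta P_i\cong I_{\sigma(i)}$, $\delta\tau\cong\tau^{-1}\delta$, invariance of sincerity, exchange of monos and epis) are standard; this buys a uniform, checkable justification of items (1)--(6) at the cost of the relabelling bookkeeping.

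That bookkeeping, which you explicitly defer, is the one real gap, and it is less routine than ``routine'': carrying Proposition~\ref{fcpp}(4) through $\delta$ gives, for $i\in[1,p-1]$ and $k<i$, composition factors $S_j$ with $j\in[i-k,p-1]\cup[p+q-1-k,\,p+q-1]$ for $\tau^{k}I_i$, whereas the statement as printed has $[p+q-k,\,p+q-1]$ for the second interval; already at $k=0$ the printed interval is empty and misses the composition factor $S_{p+q-1}$ of $I_i$, and a similar one-step shift occurs in item (6). So if you actually perform the case-by-case transport you will not recover the proposition verbatim but a corrected version of it. This points to a slip in the statement rather than in your method, but you must either carry out the interval computations and record the corrected indices, or not claim that the duality verifies the formulas exactly as written.
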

\begin{prop}\label{completo2} \label{fcpi} Let $A=K(\widetilde{\mathbb{A}}_{p,q})$. The complete list of $c.s.s.$
$(M,F,G,Y)$, where $Y$ is a preinjective $A$-module is the following:
\begin{enumerate}

\item $(\tau^{t}I_{p-1},F,G,\tau^{t}I_{p})$, with $t\geq 1$ such that $t\equiv p-1\,\,(\mod\,p)$ and $q|t$.
\item $(\tau^{t}I_{p+q-2},F,G,\tau^{t}I_1)$, with $t\geq 1$ such that $t\equiv q-1\,\,(\mod\,q)$ and $p|t$. 
\item $(\tau^{t+1}I_{p+q-1},F,G,\tau^{t}I_{0})$, with $t\geq 1$ such that $t\equiv p-1\,\,(\mod\,p)$ and  $t\equiv q-1\,\,(\mod\,q)$.
\item $(\tau^{t-p+1}I_{q-1},F,G,\tau^{t}I_{p+q-1})$, with $t\geq 1$ such that $t\equiv p-1\,\,(\mod\,p)$ and $t\equiv q-1 \,\,(\mod \,q)$.
\item $(\tau^{t-r} I_{p+q-r-2},F,G,\tau^{t}I_{r+1})$, with $t\geq 1$ such that $t\equiv r\,\,(\mod\,p)$,
      $r \in[1,p-2]$ and $t\equiv q-1\,\,(\mod\,q)$.
\item $(M,F,G,\tau^{t}I_{p+r})$, with $t\geq 1$ such that $t\equiv r\,\,(\mod\,q)$, $  r \in [1,q-2]$ and $t\equiv p-1\,\,(\mod\,p)$, where 
      \begin{displaymath}
      \begin {array}{rll}
      M \cong &\left \{ \begin {array}{ll}
      \tau^{t-r}I_{p-(r+1)}, \textrm{ if } r < p\\
      \tau^{t-(p-1)}I_{r}, \textrm{ if } r \geq p\\
      \end{array} \right.\\
      \end{array}
      \end{displaymath}

\end{enumerate}
\end{prop}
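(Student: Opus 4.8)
The plan is to follow, mutatis mutandis, the proof of Proposition~\ref{completo1}, exchanging the roles of postprojective and preinjective modules. The admissible $Y$ are already pinned down: Proposition~\ref{preinjetivo} says the stratifying systems of the form $(F,G,Y)$ with $Y$ preinjective are exactly those six families, and these are precisely the six values of $Y$ occurring in the present statement. Moreover, since $\widetilde{\mathbb{A}}_{p,q}$ has $p+q$ vertices and $|F|+|G|+1=p+q-1$, Lemma~\ref{estender} extends each $(F,G,Y)$ to a complete exceptional sequence by adjoining a single module at the front, and Lemma~\ref{unico} shows this module is unique. So the whole task is, for each family, to exhibit an indecomposable $M$ fitting into a stratifying system with $F,G,Y$; by that uniqueness, once the axioms are verified, $M$ must be the module claimed.

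Fix a family and set $M\cong\tau^{k}I_{j}$ with the proposed $k\geq 0$ and vertex $j$. Being a preinjective indecomposable over a hereditary algebra, $M$ has $\Ext^{1}_{A}(M,M)=0$. Since $\tau\FF$ and $\tau\GG$ are regular, $M$ is preinjective and $\Hom_{A}(\QQ(A),\RR(A))=0$, the Auslander--Reiten formula gives $\Ext^{1}_{A}(\FF,M)\cong D\Hom_{A}(M,\tau\FF)=0$ and likewise $\Ext^{1}_{A}(\GG,M)=0$; this reduction replaces the remark ``no morphism from a regular to a postprojective module'' used in Proposition~\ref{completo1}. It is the $\Hom$-conditions on $\FF,\GG$ that now need work: by the Auslander--Reiten formula $\Hom_{A}(\FF,\tau^{k}I_{j})\cong\Hom_{A}(\tau^{-k}\FF,I_{j})$, of dimension $\sum_{i}[\tau^{-k}F_{i}:S_{j}]$, so $\Hom_{A}(\FF,M)=0$ iff $j\notin\Supp\tau^{-k}\FF$, and similarly for $\GG$; these are exactly the inclusions read off from Corollary~\ref{suporte} (and, since $F_{i},G_{i}$ lie in $\TT_{\infty},\TT_{0}$, applying $\tau^{-k}$ to them is legitimate). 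Finally one must kill $\Hom_{A}(Y,M)$ and $\Ext^{1}_{A}(Y,M)$ with $Y\cong\tau^{a}I_{l}$ and $M\cong\tau^{k}I_{j}$ both preinjective. For $\Ext^{1}_{A}(Y,M)$: when $k\leq a$ this is killed by Lemma~\ref{repete}(2) with $r=a-k$ (checking $a\geq r\geq 0$), and in the cases where $k>a$ one uses $\Ext^{1}_{A}(Y,M)\cong D\Hom_{A}(\tau^{k}I_{j},\tau^{a+1}I_{l})$, cancels $\tau$'s down to a $\Hom$ of two injectives, and observes it vanishes because $I_{p+q-1}=S_{p+q-1}$ (the injective at the source) has $S_{l}$ as a composition factor only for $l=p+q-1$. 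For $\Hom_{A}(Y,M)$: cancelling $\tau$'s reduces it either to $\Hom_{A}(\tau^{a-k}I_{l},I_{j})$, of dimension $[\tau^{a-k}I_{l}:S_{j}]$, which is handled by the composition-factor description of non-sincere preinjectives in Proposition~\ref{preinjecte}(4)--(6), or to $\Hom_{A}(I_{l},\tau^{k-a}I_{j})$, which vanishes since an injective module emits no paths leftward in $\QQ(A)$.

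The remaining work is the case-by-case bookkeeping, entirely parallel to Proposition~\ref{completo1}: in each of the six families one records the pair $(k,j)$ attached to $M$, checks via Corollary~\ref{suporte} that the congruences imposed on $t$ make $j$ the unique vertex missing from $\Supp\tau^{-k}\FF$ and from $\Supp\tau^{-k}\GG$, applies Lemma~\ref{repete}(2) to $\Ext^{1}_{A}(Y,M)$, and uses Proposition~\ref{preinjecte} for $\Hom_{A}(Y,M)$; the sixth family forks according to whether $r<p$ (taking $M\cong\tau^{t-r}I_{p-r-1}$) or $r\geq p$ (taking $M\cong\tau^{t-(p-1)}I_{r}$). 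The main obstacle is precisely the arithmetic of these congruences modulo $p$ and modulo $q$: one must align the residues so that Corollary~\ref{suporte} lands on the single required vertex and so that the hypothesis $t\geq r\geq 0$ of Lemma~\ref{repete}(2) holds after the shift, and --- just as in Proposition~\ref{completo1}, items (2), (3) and (6) --- the case $p=q$ has to be separated out, since it may force a different choice of the completing module $M$ in some of the families.
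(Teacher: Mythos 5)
Your proposal follows essentially the same route as the paper: Proposition \ref{preinjetivo} fixes the admissible $Y$, Lemmas \ref{estender} and \ref{unico} give existence and uniqueness of the completing module $M$, and the verification for each candidate $M\cong\tau^{k}I_{j}$ reduces, exactly as you describe, to the support computation of Corollary \ref{suporte}, Lemma \ref{repete}(2) for the $\Ext^{1}$-condition against $Y$ (with the Auslander--Reiten-formula trick in the one case where $k>a$), and the composition-factor data of Proposition \ref{preinjecte} for $\Hom_A(Y,M)$. The only divergence is your closing caution about separating out $p=q$: the paper's proof does not fork on $p=q$ in the preinjective case (only item (6) forks, on $r<p$ versus $r\geq p$), though your instinct is not unreasonable, since for instance in item (4) the computation $(\Supp\FF)'\cap(\Supp\tau^{-(q-p)}\GG)'=\{q-1\}$ as carried out in the paper tacitly uses $q>p$.
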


\begin{proof} By Lemma \ref{estender} and Lemma \ref{unico} there is a unique module $M$ such that $(M,F,G,\tau^{m}I_i)$ 
is a $s.s.$ Therefore the proof consists in  verify
that the indecomposable module $ M $ satisfies the following conditions: 
\begin{itemize}
\item $\Ext_{A}^{1}(M,M)=0$.
\item  $\Hom_{A}(\FF,M)=0  \text{ and }\Hom_{A}(\GG,M)=0$.
\item  $\Ext_{A}^{1}(\FF,X)=0 \text{ and  } \Ext_{A}^{1}(\GG,M)=0$.
\item  $\Hom_{A}(\tau^{t}I_i,M)=0 \text{ and }\Ext_{A}^{1}(\tau^{t}I_i,M)=0$.
\end{itemize}

All modules $M$ that we will consider are of the form $M\cong\tau^{l}I_j$ and therefore are
indecomposable and have no self-extensions. Moreover if  $M$ is a preinjective and $R$ is a regular then, using the Auslander-Reiten formula, 
we have $\Ext^{1}_A(R,M)\cong D\Hom_A(M,\tau R)\cong 0$.
On the other hand, for $l\geq 0$, we have $\Hom_A(R,\tau^{l}I_j) \cong \Hom_A(\tau^{-l}R,I_j).$ It follows that
$\Hom_A(R,\tau^{l}I_j)=0$ if and only if $j \notin \supp\tau^{-l}R.$ Therefore we have that
$\Hom_{A}(\FF,\tau^{l}I_j)=0$ \text{ and }  $\Hom_{A}(\GG,\tau^{l}I_j)= 0$ \text{ if and only if } $j \in (\Supp\tau^{-l}\FF)'\cap 
(\Supp\tau^{-l}\GG)'.$

According to the above remarks, if $(F,G,\tau^{t}I_i)$ is a $s.s.$ and $M\cong\tau^{l}I_j$, in order to prove 
that $(M,F,G,\tau^{t}I_i)$ is a $c.s.s.$ it is sufficient to show the following conditions: 
$j \in (\Supp \tau^{-l}\FF)'\cap (\Supp \tau^{-l}\GG)'$, $\Hom_{A}(\tau^{t}I_i,M)=0$ and $\Ext_{A}^{1}(\tau^{t}I_i,M)=0$.

We consider various possibilities, which depend on the form of $t$, according to the list of the statement of Proposition  \ref{preinjetivo}.

\noindent(1) Let $t\geq 1$ such that $t\equiv p-1\,\,(\mod \,p)$ and $q\mid t$. Then, by Corollary \ref{suporte}, we have 
      $$(\Supp \tau^{-t}\FF)' \cap(\Supp \tau^{-t}\GG)'=(\Supp \tau^{-(p-1)}\FF)'\cap(\Supp \tau^{-t}\GG)'= \{p-1\}.$$
      We observe that $\Hom_A(\tau^{t}I_{p},\tau^{t}I_{p-1})\cong \Hom_A(I_{p},I_{p-1}) \cong 0$, because $I_p$ has no $S_{p-1}$
      as a composition factor.
         
      Moreover, by Lemma \ref{repete}, $\Ext^{1}_A(\tau^{t}I_{p},\tau^{t}I_{p-1})=0.$  Then 
      $(\tau^{t}I_{p-1},F,G,\tau^{t}I_{p})$ is a $s.s.$

\noindent(2) Let $t\geq 1$ such that $t\equiv q-1\,\,(\mod \,q)$ and $p\mid t$. By Corollary \ref{suporte} we have that
      $$(\Supp \tau^{-t}\FF)'\cap(\Supp \tau^{-t}\GG)'=(\Supp \tau^{-t}\FF)'\cap(\Supp \tau^{-(q-1)}\GG)'=\{p+q-2\}.$$
      On the other hand, since $S_{p+q-2}$ is not a composition factor of $I_1$ we see that 
$\Hom_A(\tau^{t}I_{1},\tau^{t}I_{p+q-2})\cong \Hom_A(I_{1},I_{p+q-2})\cong 0$.
     
   Finally, by Lemma \ref{repete}, it follows that $\Ext^{1}_A(\tau^{t}I_{1},\tau^{t}I_{p+q-2})=0.$
      
\noindent(3) Let $t\geq 1$ such that $t\equiv p-1\,\,(\mod \,p)$ and $t\equiv q-1\,\,(\mod\,q)$. By  Corollary \ref{suporte} we have that
     $$(\Supp \tau^{-t-1}\FF)'\cap(\Supp \tau^{-t-1}\GG)'= (\Supp \FF)'\cap(\Supp \GG)'=\{0, p+q-1\}.$$
    
     From the Auslander-Reiten formula it may be concluded that 
     $$\Hom_A(\tau^{t}I_{0},\tau^{t+1}I_{p+q-1})\cong\Hom_A(I_{0},\tau I_{p+q-1})\cong D\Ext_A^{1}(I_{p+q-1},I_{0})=0 \text{ and }$$
    \[\begin{array}{rcl}
    \Ext^{1}_A(\tau^{t}I_{0},\tau^{t+1}I_{p+q-1})&\cong& D\Hom_{A}(\tau^{t+1}I_{p+q-1},\tau^{t+1}I_{0})\\
                                                 &\cong& \Hom_A(I_{p+q-1},I_{0})\\
                                                  &=&0.
     \end{array}\]
     
\noindent(4) Let  $t\geq 1$ such that  $t\equiv p-1 \,\,(\mod \,p)$ and  $t\equiv q-1 \,\,(\mod \,q)$. We show that 
      $(\tau^{t-p+1}I_{q-1},F,G,\tau^{t}I_{p+q-1})$ is a $c.s.s.$ over $A$. By
      Corollary \ref{suporte} it follows that
      $$ (\Supp \tau^{-t+p-1}\FF)'\cap(\Supp \tau^{-t+p-1}\GG)'=(\Supp \FF)'\cap(\Supp \tau^{-(q-p)}\GG)'
                                                           =\{q-1\}.$$
      Since, $\Hom_A(\tau^{t}I_{p+q-1},\tau^{t-(p-1)}I_{q-1})\cong \Hom_A(\tau^{p-1}I_{p+q-1},I_{q-1})=0,$
      because of Proposition \ref{preinjecte} (5)  we get  that  $[\tau^{p-1}I_{p+q-1}:S_{q-1}]=0.$  Moreover, by  Lemma \ref{repete}, 
      we have that  $\Ext^{1}_A(\tau^{t}I_{p+q-1},\tau^{t-(p-1)}I_{q-1})=0.$
      
\noindent(5) Let $t\geq 1$ such that $t\equiv r\,\,(\mod\,p)$, $ r\in[1,p-2]$ and $t\equiv q-1\,\,(\mod\,q)$ and 
      $M\cong \tau^{t-r} I_{p+q-r-2}$.  First, by Corollary \ref{suporte}, we have that
      \[\begin{array}{rcl}
      (\Supp\tau^{-(t-r)}\FF)'\cap(\Supp\tau^{-(t-r)}\GG)'&=&(\Supp\FF)'\cap (\Supp\tau^{-(t-r)}\GG)'\\
                                                          &=& (\Supp\tau^{-(q-1-r)}\GG)'\\
                                                          &=&\{p+q-r-2\}.
      \end{array} \]
      Furthermore, $\Hom_A(\tau^{t}I_{r+1},\tau^{t-r}I_{p+q-r-2})\cong \Hom_A(\tau^{r}I_{r+1},I_{p+q-r-2})=0,$
      because, accordingly with  Proposition \ref{preinjecte} (5), $\tau^{r}I_{r+1}$ has no $S_{p+q-r-2}$
      as a composition factor. Finally, by Lemma \ref{repete},  we have that  $$\Ext^{1}_A(\tau^{t}I_{r+1},\tau^{t-r}I_{p+q-r-2})=0.$$
			
      \noindent(6) Let  $t\geq 1$ such that $t\equiv r\,\,(\mod\,q)$, $ r \in [1,q-2]$ and  $t\equiv p-1\,\,(\mod\,p)$.
      To complete the $s.s.$ $(F,G,\tau^{t}I_{p+r})$ we consider two situations:
      \begin{itemize}
      \item If $r<p$.  Let $M \cong \tau^{t-r}I_{p-(r+1)}$. We have that 
       \[\begin{array}{rcl}
      (\Supp\tau^{-(t-r)}\FF)'\cap(\Supp\tau^{-(t-r)}\GG)'&=&(\Supp\tau^{-(p-1-r)}\FF)'\cap (\Supp \GG)'\\
                                                          &=& (\Supp\tau^{(r+1)}\FF)'\\
                                                          &=&\{p-(r+1)\}.\\
      \end{array} \]
      Since $[\tau^{r}I_{p+r}:S_{p-(r+1)}]=0$, by Proposition \ref{fcpi}  (5), then 
      $$\Hom_A(\tau^{t}I_{p+r},\tau^{t-r}I_{p-(r+1)})\cong \Hom_A(\tau^{r}I_{p+r},I_{p-(r+1)})=0.$$
      By Lemma  \ref{repete}, we have that $\Ext^{1}_A(\tau^{t}I_{p+r},\tau^{t-r}I_{p-(r+1)})=0.$ 
      
      \item If $r \geq p$. Let $M\cong \tau^{t-(p-1)}I_{r}$. Then we have that
      \[\begin{array}{rcl}
      (\Supp\tau^{-[t-(p-1)]}\FF)'\cap(\Supp\tau^{-[t-(p-1)]}\GG)'&=&(\Supp\tau^{-[r-(p-1)]}\GG)'\\
                                                          &=& \{p+(r-p+1)-1 \}\\
                                                          &=&\{r\}. \\
      \end{array} \] 
      According to Proposition \ref{fcpi} (5), we have that  $$\Hom_A(\tau^{t}I_{p+r},\tau^{t-(p-1)}I_{r})\cong \Hom_A(\tau^{p-1}I_{p+r},I_{r}) \cong 
0$$
      and by Lemma \ref{repete} we have  that $\Ext^{1}_A(\tau^{t}I_{p+r},\tau^{t-(p-1)}I_{r})=0.$ 
      
     \end{itemize}
\end{proof}

Finally, Proposition \ref{completo1} and Proposition \ref{completo2} together are the main result of this paper, which establish the complete list 
of $c.s.s.$ of the form $(X,F,G,Y)$. 

\begin{thm} Let $A=K(\widetilde{\mathbb{A}}_{p,q})$. The  complete list of $c.s.s.$ of the form
$(M,F,G,Y)$  is as follows:
\begin{enumerate}
\item $(S_{p+q-1},F,G, P_0)$.
\item $(M,F,G, P_{p+q-1})$, where
      \begin{displaymath}
      \begin {array}{rll}
      M \cong &\left \{ \begin {array}{ll}
      \tau^{-p+1}P_{0}, \textrm{ if } p=q\\
      \tau^{-p+1}P_{q-1}, \textrm{ if } p \not =q.\\
      \end{array} \right.\\
      \end{array}
      \end{displaymath}
\item $(M,F,G,\tau^{-t} P_{p+q-1})$ with $t\geq 1$ such that $p\mid t$ and  $q\mid t$, where
\begin{displaymath}
      \begin {array}{rll}
      M \cong &\left \{ \begin {array}{ll}
      \tau^{-t-p+1}P_{0}, \textrm{ if } p=q\\
      \tau^{-t-p+1}P_{q-1}, \textrm{ if } p \not =q.\\
      \end{array} \right.\\
      \end{array}
\end{displaymath}
\item $(\tau^{-t+1}P_{p+q-1},F,G,\tau^{-t}P_0)$, with $t\geq 1$
      such that $p\mid t$ and  $q \mid t$.
\item $(\tau^{-t-(p-r-1)}P_{q+r-1},F,G,\tau^{-t} P_{p-r})$, with $t\geq 1$ such that $q\mid t$,  \\ $t\equiv r \,(\mod \,p)$ and $r \in [1,p-1]$.

    \item $(M, F,G,\tau^{-t}P_{p+q-r-1})$, with $t\geq 1$ such that $p\mid t$,  $t\equiv r \,(\mod \,q)$ and  $r \in [1, q-1]$, where
      \begin{displaymath}
      \begin {array}{rll}
      M \cong &\left \{ \begin {array}{ll}
      \tau^{-t-q+r+1}P_{p-q+r}, \textrm{ if } p \geq q-r\\
      \tau^{-t-p+1}P_{q-r-1}, \textrm{ if } p<q-r.\\
      \end{array} \right.\\
      \end{array}
\end{displaymath}
\item $(\tau^{t}I_{p-1},F,G,\tau^{t}I_{p})$, with $t\geq 1$ such that $t\equiv p-1\,\,(\mod\,p)$ and $q\mid t$.
\item $(\tau^{t}I_{p+q-2},F,G,\tau^{t}I_1)$, with $t\geq 1$ such that $t\equiv q-1\,\,(\mod\,q)$ and $p \mid t$. 
\item $(\tau^{t+1}I_{p+q-1},F,G,\tau^{t}I_{0})$, with $t\geq 1$ such that $t\equiv p-1 \,\,(\mod\,p)$ and $t\equiv  q-1\,\,(\mod\,q)$.
\item $(\tau^{t-p+1}I_{q-1},F,G,\tau^{t}I_{p+q-1})$, with $t\geq 1$ such that $t\equiv p-1\,\,(\mod\,p)$ and $t\equiv  q-1 \,\,(\mod \,q)$.
\item $(\tau^{t-r} I_{p+q-r-2},F,G,\tau^{t}I_{r+1})$, with $t\geq 1$ such that $t\equiv r\,\,(\mod\,p)$,
      $r \in [1,p-2]$ and $t\equiv q-1\,\,(\mod\,q)$.
\item $(M,F,G,\tau^{t}I_{p+r})$, with $t\geq 1$ such that $t\equiv r\,\,(\mod\,q)$, $r \in [1,q-2]$ and $t\equiv p-1\,(\mod\,p)$, where 
      \begin{displaymath}
      \begin {array}{rll}
      M \cong &\left \{ \begin {array}{ll}
      \tau^{t-r}I_{p-(r+1)}, \textrm{ if } r < p\\
      \tau^{t-(p-1)}I_{r}, \textrm{ if } r \geq p.\\
      \end{array} \right.\\
      \end{array}
      \end{displaymath}
\end{enumerate}

\end{thm}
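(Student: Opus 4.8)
The plan is to derive the theorem directly from Proposition \ref{completo1} and Proposition \ref{completo2}, the only additional input being that in a complete stratifying system of the prescribed shape $(M,F,G,Y)$ the last entry $Y$ cannot be regular. I shall use throughout that, by Theorem \ref{desc}, $\Gamma(\mod A)$ is the disjoint union of the postprojective component $\PP(A)$, the preinjective component $\QQ(A)$, and a regular part made up of the two non-homogeneous tubes $\TT_\infty,\TT_0$ together with the homogeneous tubes $\TT_\lambda$; and that $(F,G)$ is a $s.s.$ of size $(p-1)+(q-1)=n-2$, so that a $c.s.s.$ of the form $(M,F,G,Y)$ has size $n$ and therefore $M$ and $Y$ are single indecomposable modules.

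One inclusion is already in hand: each sequence listed in items (1)--(6) is a $c.s.s.$ by Proposition \ref{completo1}, and each sequence listed in items (7)--(12) is a $c.s.s.$ by Proposition \ref{completo2}; moreover these two families are disjoint, since in the former $Y\in\PP(A)$ and in the latter $Y\in\QQ(A)$.

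For the reverse inclusion, let $(M,F,G,Y)$ be an arbitrary $c.s.s.$ of this shape. Deleting the first entry leaves the sequence $(F,G,Y)$, which is still an exceptional sequence --- the vanishing conditions of Definition \ref{ss} pass to any subsequence --- hence a $s.s.$ by Lemma \ref{excseq}. I claim $Y$ is not regular: a module in a homogeneous tube $\TT_\lambda$ satisfies $\tau Y\cong Y$, so $\Ext^1_A(Y,Y)\neq 0$ by the remark preceding Theorem \ref{desc}, and cannot occur in an exceptional sequence, while every other regular module lies in $\TT_\infty$ or $\TT_0$. Thus, if $Y$ were regular, $(F,G,Y)$ would be a stratifying system of size $n-1$ consisting entirely of regular modules, contradicting \cite[Lemma~3.1]{CMa} recalled above, according to which a $s.s.$ of regular modules over a hereditary algebra of Euclidean type has size at most $n-2$. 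Hence $Y\in\PP(A)\cup\QQ(A)$.

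It remains to split on the component of $Y$. If $Y$ is postprojective, then $(F,G,Y)$ is one of the six stratifying systems described in Proposition \ref{posprojetivo1}; by Lemma \ref{estender} and Lemma \ref{unico} there is a \emph{unique} indecomposable $M$ with $(M,F,G,Y)$ a $c.s.s.$, and Proposition \ref{completo1} exhibits it, so $(M,F,G,Y)$ is one of items (1)--(6). Symmetrically, if $Y$ is preinjective, Proposition \ref{preinjetivo} together with Lemma \ref{estender}, Lemma \ref{unico} and Proposition \ref{completo2} force $(M,F,G,Y)$ to be one of items (7)--(12). This covers all cases and shows the list is complete. The substantive work --- the Auslander--Reiten combinatorics underlying sincerity, composition factors, and the supports $\Supp\tau^{\pm t}\FF$ and $\Supp\tau^{\pm t}\GG$ --- has already been carried out in Propositions \ref{fcpp}--\ref{completo2}, so I expect no genuinely new obstacle here; the only point still requiring care is that the case distinctions in those propositions are exhaustive, which is exactly what the existence and uniqueness statements of Lemma \ref{estender} and Lemma \ref{unico} provide.
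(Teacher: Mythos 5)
Your proposal is correct and follows essentially the same route as the paper, which obtains the theorem simply by combining Proposition \ref{completo1} and Proposition \ref{completo2}. The one step you make explicit that the paper leaves implicit --- ruling out a regular $Y$ via the bound of at most $n-2$ regular modules in a stratifying system from \cite{CMa} --- is a correct and worthwhile addition, not a departure from the paper's argument.
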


\section*{Acknowledgments}
The first named author thanks CAPES and CNPq (235081/2014-0) for financial support. The second author thanks CNPq for the research grant, and also Fapesp for the 
support, processo tem\'atico 2014/-8608-0.
The authors thank Maria Izabel Ramalho Martins for helpful discussions. We also thank the referee for his suggestions, which certainly made this paper
more readable, and also for some useful corrections on the Mathematics. 

\thebibliography{99}{

\bibitem{ASS1}I. Assem, D. Simson and A. Skowronski. {\it Elements of Representation
  Theory of Associative Algebras. Vol 1: Techniques of Representation
  Theory}, Series: London Math. So. Student  Texts 65. Cambridge
  Univ. Press. (2006).
\bibitem{CMa}  P. Cadavid and E. do N.  Marcos, Stratifying systems over hereditary algebras,  {\it J. Algebra Appl.}, {\bf 14}, (2015), 1550093.

\bibitem{teseC}P. Cadavid, Sistemas estratificantes sobre álgebras hereditárias. Ph.D thesis. IME-USP. 2012.

\bibitem{CB} W.  Crawley-Boevey,  Exceptional sequences of representations of quivers.  In Representations of algebras, Proc. Ottawa 1992, eds V. Dlab 
and H. Lenzing, 
Canadian Math. Soc. Conf. Proc. 14 (Amer. Math. Soc., 1993), 117-124.

\bibitem{ES}K.  Erdman  and C.  Sáenz,   On standardly stratified algebras, {\it Comm.  Algebra}, {\bf 31}, (2003), 3429-3446.

\bibitem{MMS2} E. do N. Marcos, O. Mendoza and C. Sáenz,  Stratifying systems via relative simple mo\-du\-les, {\it J. Algebra}, {\bf 280}, (2004), 
472-487.

\bibitem{MMS1}  E. do N. Marcos, O. Mendoza and C. Sáenz, Stratifying systems via relative projective
mo\-du\-les, {\it  Comm. Algebra}, {\bf 33}, (2005), 1559-1573.
                      

\bibitem{SS2} D. Simson and A. Skowronski, {\it Elements of Representation Theory of Associative Algebras. Vol 2: Tubes and Conceladed Algebras of 
Euclidean type}, 
Series: London Math. Soc. Student Texts 71. Cambridge  Univ.  Press.
(2007).

}


\end{document}